\documentclass{amsart}
\usepackage{amsmath}
\usepackage{amsthm}
\usepackage{titlesec}
\usepackage{mathrsfs}
\usepackage{amssymb}
\usepackage{mathtools}
\usepackage{color}
\usepackage{algorithm}
\usepackage{algorithmic}
\usepackage{cite}
\usepackage{latexsym}
\usepackage{booktabs}
\usepackage{bm}
\usepackage{graphicx}
\usepackage{textcomp}
\usepackage{subfigure}
% \usepackage{showtags}
% \usepackage{showkeys}
%\allowdisplaybreaks[4]
% \usepackage{biblatex}

%\def\baselinestretch{1.2}

\newcommand{\Rni}{\mathbb{R}^{n_{i}}}

\newcommand{\nfR}{\mathbb{R}}
\newcommand{\xpi}{x_i}

\newcommand{\xpij}{x_{i,j}}
\newcommand{\xpik}{x_i^{(k)}}
\newcommand{\ypi}{y_i}
\newcommand{\xmi}{x_{-i}}
\newcommand{\allx}{\mathbf{x}}
\newcommand{\fpi}{f_{i}}
\newcommand{\Xpi}{X_{i}}
\newcommand{\lXpi}{X_{i}(x_{-i})}
\newcommand{\lip}{\left<}
\newcommand{\rip}{\right>}

\newcommand{\qmod}{\mbox{Qmod}}

\titleformat{\section}[hang]{\large\bfseries}{\thesection.}{1em}{}
\titleformat{\subsection}{\normalsize\bfseries}{\thesubsection}{1em}{}

\DeclareMathOperator{\Rank}{rank}
\DeclareMathOperator{\dom}{dom}

\theoremstyle{remark}
\newtheorem*{remark}{Remark}

\newcommand{\re}{\mathbb{R}}

\newcommand{\N}{\mathbb{N}}
\def\af{\alpha}

\newcommand{\st}{\mbox{s.t.}}
\newcommand{\reff}[1]{(\ref{#1})}

\newcommand{\eps}{\epsilon}

\newcommand{\bdes}{\begin{description}}
\newcommand{\edes}{\end{description}}

\newcommand{\bal}{\begin{align}}
\newcommand{\eal}{\end{align}}

\newcommand{\bnum}{\begin{enumerate}}
\newcommand{\enum}{\end{enumerate}}

\newcommand{\bit}{\begin{itemize}}
\newcommand{\eit}{\end{itemize}}

\newcommand{\bea}{\begin{eqnarray}}
\newcommand{\eea}{\end{eqnarray}}
\newcommand{\be}{\begin{equation}}
\newcommand{\ee}{\end{equation}}

\newcommand{\baray}{\begin{array}}
\newcommand{\earay}{\end{array}}

\newcommand{\bsry}{\begin{subarray}}
\newcommand{\esry}{\end{subarray}}

\newcommand{\bca}{\begin{cases}}
\newcommand{\eca}{\end{cases}}

\newcommand{\bcen}{\begin{center}}
\newcommand{\ecen}{\end{center}}

\newcommand{\bbm}{\begin{bmatrix}}
\newcommand{\ebm}{\end{bmatrix}}

\newcommand{\btab}{\begin{tabular}}
\newcommand{\etab}{\end{tabular}}

\theoremstyle{definition}
\newtheorem{definition}{Definition}[section]

\theoremstyle{plain}

\newtheorem{lemma}[definition]{Lemma}
\newtheorem{theorem}[definition]{Theorem}

\newtheorem{example}[definition]{Example}
\newtheorem{alg}[definition]{Algorithm}

% set up the counters in sections

\setcounter{equation}{0}
\setcounter{subsection}{0}
\numberwithin{equation}{section}

\begin{document}

\title[The Gauss-Seidel Method for GNEPPs]
{The Gauss-Seidel Method for Generalized Nash Equilibrium Problems of Polynomials}

\author[Jiawang Nie]{Jiawang~Nie}
\address{Jiawang Nie, Xindong Tang, Department of Mathematics,
University of California San Diego,
9500 Gilman Drive, La Jolla, CA, USA, 92093.}
\email{njw@math.ucsd.edu, xit039@ucsd.edu}

\author[Xindong Tang]{Xindong~Tang}

\author[Lingling Xu]{Lingling~Xu}
\address{School of Mathematical Sciences, Nanjing Normal University,
Jiangsu Key Laboratory for NSLSCS, Nanjing Normal University, Nanjing,
Jiangsu, China, 210023.}
\email{xulingling@njnu.edu.cn}

\subjclass[2010]{90C30, 91A10, 90C22, 65K05}

\date{}

\keywords{generalized Nash equilibrium problem, Gauss-Seidel method,
polynomial, generalized potential game, Moment-SOS hierarchy}

\begin{abstract}
This paper concerns the generalized Nash equilibrium problem of polynomials (GNEPP). 
We apply the Gauss-Seidel method and Lasserre type Moment-SOS relaxations 
to solve GNEPPs. The convergence of the Gauss-Seidel method is known for some special GNEPPs,
such as generalized potential games (GPGs).
We give a sufficient condition for GPGs and propose a numerical certificate, 
based on Putinar's Positivstellensatz.
Numerical examples for both convex and nonconvex GNEPPs
are given for demonstrating the efficiency of the proposed method. 
\end{abstract}

\maketitle

\section{Introduction}
\label{intro}
The generalized Nash equilibrium problem (GNEP) is a kind of games
such that the feasible set of each player's strategy depends on other players' strategies.
Let $N$ be the number of players. Suppose the $i$th player's strategy is the variable
$x_{i}\in\mathbb{R}^{n_{i}}$
(the $n_i$-dimensional Euclidean space over the real field $\re$).
The vector of all players' strategy is
\[
\allx \, := \, (x_1, \ldots, x_N).
\]
The total dimension of all players' strategies is $n := n_1 + \cdots + n_N.$
For convenience, we use $x_{-i}$ to denote the subvector of all players' strategies
except the $i$th one, i.e.,
\[
x_{-i} \, := \, (x_1, \ldots, x_{i-1}, x_{i+1}, \ldots, x_N).
\]
When the $i$th player's strategy $x_i$ is considered,
we use $(x_{i},x_{-i})$ to represent $\allx$.
% and write that $\allx = (x_i, x_{-i})$.
When $(y,x_{-i})$ is written, it means that the $i$th player's strategy is
$y\in\Rni$ while the vector of all other players' strategies is fixed to be $\xmi$. 
This paper considers GNEPs whose objective and constraining functions
are given by polynomials.

\begin{definition}
A generalized Nash equilibrium problem of polynomials (GNEPP) is to
find $\allx\in\nfR^{n}$ such that each $\xpi$ is an optimizer of the
$i$th player's optimization problem
\be \label{GNEP}
 \left\{ \begin{array}{cl}
 \min\limits_{\xpi\in \re^{n_i}}  &  \fpi(\xpi,\xmi) \\
 \st & g_{i,j}(\xpi, \xmi) \geq 0 \, (j=1,\ldots, s_i),
\end{array} \right.
\ee
where all $f_i(\xpi,\xmi)$ and $g_{i,j}(\xpi,\xmi)$
are polynomial functions in $\allx$.
A solution $\allx$ satisfying the above is
called a generalized Nash equilibrium (GNE).
\end{definition}

Let $g_i=(g_{i,1},\ldots,g_{i,s_i}): \mathbb{R}^n \rightarrow \mathbb{R}^{s_i}$
be the vector-valued function. The inequality $g_i(\xpi, \xmi) \ge 0$
is defined componentwisely. Then \reff{GNEP} can be rewritten as
\be
\label{subprob}
%\tag{$\mathcal{SP}$-i}
\left\{ \begin{array}{cl}
\min\limits_{\xpi\in \re^{n_i}}  &  \fpi(\xpi,\xmi) \\
\st & g_i(\xpi, \xmi) \geq 0.\\
\end{array} \right.
\ee
For given $\xmi$, the feasible strategy set for the $i$th player is
\be \label{Xi(x-i)}
X_i(\xmi) \, := \,
\{x_i \in \re^{n_i}: g_i(\xpi, \xmi) \geq 0  \} .
\ee
The entire strategy vector $\allx$ is said to be a feasible point if each subvector
$\xpi$ of $\allx$ is feasible for (\ref{subprob}).
For instance, the following GNEP with two players
\be
\label{convnotsol}
\begin{array}{ccccc}
\min\limits_{x_1 \in \re^1}& x_1 & \vline &
\min\limits_{x_2 \in \re^1} & (x_2)^2-(x_1-1)x_2 \\
\st & x_2(x_1-x_2-1)\ge 0,  & \vline & \st &(x_1)^2+(x_2)^2 \le 3,  \\
    & x_1\ge 0, & \vline & & x_2\ge 0
\end{array}
\ee
is a GNEPP. The dimensions $n_1 = n_2 = 1$ and
\[
\begin{array}{l}
X_1(x_{-1})=\{x_1\in\re: x_2(x_1-x_2-1)\ge 0,x_1\ge0\}, \\
X_2(x_{-2})=\{x_2\in\re: (x_1)^2+(x_2)^2 \le 3,x_2\ge0 \}.
\end{array}
\]
For the first player, when $x_2>0$, its feasible set is $x_1\ge x_2+1$,
and its best strategy is $x_1=x_2+1$.
When $x_2 = 0$, the first player's  best strategy is $x_1=0$.
For any fixed $x_1$ with $x_1^2 \leq 3$,
the second player's problem is feasible and its best strategy is
$\max ( (x_1-1)/2, 0 ).$
One can verify that  $(0,0)$ is a GNE for this GNEPP.

Generalized Nash equilibrium problems have broad applications, for instance,
in the environmental pollution control \cite{Facchinei2007,breton2006game}.
Let $N$ be the number of countries involved in the pollution control and
$x_{i,0}$ denote the (gross) emissions from the $i$th country.
Assume that the by-product gross emissions are proportional to the industrial output.
The revenue of the $i$th country depends on $ x_{i,0}$.
Typically, the revenue is $x_{i,0}(b_i-1/2x_{i,0})$ with a given parameter $b_i$.
The variable $x_{i,j} $ represents the investment from country $i$ to country $j$.
Let $x_i := (x_{i,0},\dots,x_{i,N})$.
For an investor, the benefit of the investment lies in the
\emph{emissions reduction units} $\gamma_{i,j}x_{i,j}$ with given parameters
$\gamma_{i,j}  (i, j= 1, \cdots, N)$.
The net emission in country $i$ is $x_{i,0}-\sum_{j=1}^N\gamma_{j,i}x_{j,i}$,
which is always nonnegative.
The accounted-for-emissions for the $i$th country is
$x_{i,0}-\sum_{j=1}^N\gamma_{i,j}x_{i,j}$.
It must be kept below or equal a certain prescribed level $E_i$
under the environmental control.
The pollution in a country may affect other countries.
The pollution damage for the $i$th country is
 \[
 p_i \, := \,
 x_{i,0}-\sum\limits_{j=1}^N\gamma_{j,i}x_{j,i}+2\prod_{k=1}^N (x_{k,0}-\sum\limits_{j=1}^N\gamma_{j,k}x_{j,k}).
 \]
For given parameters $b_i,\gamma_{i,j},E_i$,
the $i$th country's optimization problem is
\begin{equation}
\left\{ \begin{array}{ll}
 \min\limits_{ x_i } &-x_{i,0}(b_i-\frac{1}{2} x_{i,0})
            +\sum\limits_{j=1}^{N}x_{i,j}+ p_i \\
    \st &x_{i,0}\dots x_{i,j}\ge0,\\
    &x_{i,0}-\sum_{j=1}^N\gamma_{i,j}x_{i,j}\le E_i,\\
    &x_{k,0}-\sum_{j=1}^N\gamma_{j,k}x_{j,k}\ge0\ (k=1,\dots,N).
  \end{array} \right.
\end{equation}
All countries expect to maximize their revenues subtracting
investments and pollution damages.
Another application of GNEPP is the model for Internet switching
(see Example~\ref{internet}).
More applications for GNEPPs can be found in
\cite{Contreras2004,Couzoudis2013,Paccagnan2016,Yin2009,Anselmi2017}.

\subsection{GNEPs and some existing work}

The GNEP is an extension of the Nash equilibrium problem (NEP)
\cite{10.2307/1969529,nobel}. For NEPs,
the feasible set of each player's strategy is independent of other players.
The GNEP originated from economics and was studied in
\cite{debreu1952social,arrow1954existence,basar1999dynamic,McKenzie1959,rosen1965}.
Robinson \cite{robinson1993shadow1, robinson1993shadow2}
established the shadow prices
for measuring the effectiveness in an optimization-based combat model. 
Scotti \cite{scotti1995structural} introduced GNEPs into the study of structural design.
Recently, GNEPs have been widely used in many different areas outside economics,
such as  transportation, telecommunications, pollution control.
We refer to
\cite{ardagna2017generalized,breton2006game, oggioni2012generalized,
sun2007equilibrium,yue2014game,zhou2005generalized}
for related work.

The following is a classical result about existence of solutions for GNEPs \cite{debreu1952social,Facchinei2007}.
We refer to \cite{rockafellar2009variational}
for the notion of outer and inner semicontinuity and quasi-convexity.
\begin{theorem}
\cite{debreu1952social,Facchinei2007}  \label{exstc}
Suppose the GNEP of \reff{GNEP} satisfies:
\begin{enumerate}
\item There exist $N$ nonempty, convex and compact sets $K_i\subseteq \mathbb{R}^{n_i}$
such that for every $(\xpi,\xmi)\in\mathbb{R}^n$ with $x_i\in K_i$ and for every $i$,
the set $X_i(x_{-i})$ is nonempty, closed and convex, $X_i(x_{-i})\subseteq K_i$,
and $X_i(\,\cdot\,)$, as a point-to-set map, is both outer and inner semicontinuous.

\item 
For every given $x_{-i}$, the function $f_i(\,\cdot\,,x_{-i})$
is quasi-convex on $X_i(x_{-i})$.
\end{enumerate}
Then, a generalized Nash equilibrium exists.
\end{theorem}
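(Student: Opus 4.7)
The plan is to apply \emph{Kakutani's fixed point theorem} to a best-response set-valued map on the product set $K := K_1 \times \cdots \times K_N$, which by hypothesis~(1) is nonempty, convex, and compact. The goal is to produce $\allx^{\ast} \in K$ such that for every $i$ one has $\xpi^{\ast} \in X_i(\xmi^{\ast})$ and $\fpi(\xpi^{\ast},\xmi^{\ast}) \le \fpi(y,\xmi^{\ast})$ for all $y \in X_i(\xmi^{\ast})$; such an $\allx^{\ast}$ is by definition a GNE.

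First I would define, for each player $i$ and each $\allx \in K$, the best-response set
\[
\Phi_i(\allx) \,:=\, \{\, y \in X_i(\xmi) \,:\, \fpi(y,\xmi) \le \fpi(z,\xmi) \text{ for all } z \in X_i(\xmi) \,\},
\]
and set $\Phi(\allx) \,:=\, \Phi_1(\allx) \times \cdots \times \Phi_N(\allx)$. Three immediate observations: $\Phi_i(\allx)$ is nonempty because $\fpi(\cdot,\xmi)$ is continuous on the nonempty compact set $X_i(\xmi)$; $\Phi_i(\allx) \subseteq K_i$ since $X_i(\xmi) \subseteq K_i$; and $\Phi_i(\allx)$ is convex because the minimizer set of a quasi-convex function on a convex set is convex, which is precisely what hypothesis~(2) provides. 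Hence $\Phi$ maps $K$ into itself with nonempty, convex values.

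The main technical step is to verify that $\Phi$ has \emph{closed graph} as a correspondence $K \rightrightarrows K$. I would do this player-wise via \emph{Berge's maximum theorem}: the objective $\fpi(y,\xmi)$ is jointly continuous in $(y,\xmi)$, and the constraint correspondence $\xmi \mapsto X_i(\xmi)$ is nonempty-, compact-, convex-valued, and both inner and outer semicontinuous by hypothesis~(1). Berge's theorem then guarantees that the value function $v_i(\xmi) := \min\{\, \fpi(y,\xmi) : y \in X_i(\xmi) \,\}$ is continuous and that the argmin correspondence $\Phi_i$ is outer semicontinuous with nonempty compact values. Taking Cartesian products, $\Phi$ also has closed graph.

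Finally, since $K$ is nonempty, convex, and compact, and $\Phi: K \rightrightarrows K$ has nonempty convex values together with closed graph, Kakutani's theorem yields a fixed point $\allx^{\ast} \in \Phi(\allx^{\ast})$, which unpacks immediately to the desired GNE. The main obstacle is the closed-graph verification for each $\Phi_i$: this is exactly where both the inner and the outer semicontinuity of $X_i(\cdot)$ are essential---outer semicontinuity ensures that limits of feasible points remain feasible, while inner semicontinuity prevents the value function from jumping downward at limit points, so that no minimizer is lost in passing to the limit.
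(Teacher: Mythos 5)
Your proof is correct and is the standard Debreu-type argument (Kakutani's fixed point theorem applied to the best-response correspondence, with closed graph obtained from Berge's maximum theorem using the continuity of the feasibility maps and convex values from quasi-convexity). The paper itself states this theorem as a classical result cited from \cite{debreu1952social,Facchinei2007} and gives no proof, so there is nothing to compare beyond noting that your argument is essentially the one found in those references.
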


There are no special existence results for solutions of GNEPPs,
to the best of the authors' knowledge.
There exists some work for solving GNEPs.
Under some convexity assumptions,
the GNEP is equivalent to a quasi-variational inequality problem(QVIP)
\cite{ facchinei2007generalized,harker1991generalized,
nabetani2011parametrized,Pang2005,Aussel2017}.
The Karush-Kuhn-Tucker (KKT) optimality conditions
for each player's optimization problem
%can be put together, which forms nonlinear equations.
can be used together with the semismooth Newton-type method \cite{dreves2014new,dreves2011solution,facchinei2009generalized}.
A GNEP can be transformed to a NEP with the usage of penalty functions
\cite{FacKan10,fukushima2011, kanzow2016,Facchinei2011partial}.
Gap functions are frequently used for solving GNEPs \cite{vonHeusinger2009-2}.
A relaxation method for jointly convex GNEPs,
based on inexact line search and Nikaido-Isoda functions,
is given in \cite{vonHeusinger2009}.
A study on GNEPs with linear coupling constraints and
mixed-integer variables is in \cite{Sagratella2019}.
Facchinei et al.~\cite{Facchinei2011} proposed the Gauss-Seidel method
for solving GNEPs.
Its main idea is to solve each player's optimization problem alternatively.
We also refer to \cite{Sagratella2016computing,Sagratella2017computing}
for studies on the Gauss-Seidel method for solving GNEPs
with discrete and mixed integer variables.
Convergence of the Gauss-Seidel method can be shown
for some special GNEPs, such as generalized potential games (GPGs).
We refer to \cite{Facchinei2011,Monderer1996,Sagratella2017algorithms}
for studies on potential games and GPGs.
Most of the existing methods assume that
each individual player's optimization problem is convex.
For more work about GNEPs, we refer to the surveys
\cite{Facchinei2007, fischer2014generalized}.

\subsection{Contributions}

In this paper, we use the Gauss-Seidel method introduced in \cite{Facchinei2011}
for solving GNEPPs.
This method requires to get global minimizers
for the occurring optimization problems in each loop.
Although the Gauss-Seidel method is not theoretically 
guaranteed to converge for all GNEPPs, 
it converges for many problems in the computational practice.
For some special GNEPs, the convergence can be guranteed for the Gauss-Seidel method.
Generalized potential games (GPGs) are such GNEPs \cite{Facchinei2011}.
We would like to remark that the Gauss-Seidel method
can be applied to a GNEPP even if it is not a GPG,
while the convergence is not guaranteed. In practice,
the method works well for many GNEPPs that are not GPGs.
In \cite{Facchinei2011}, it was shown that if a GNEP is in some special forms,
then it is a GPG (see section~\ref{sc:GPG}).
In section \ref{sc:GPG}, we give the first numerical method
for verifying GPGs. Our major results are:
\begin{itemize}

\item We use the Lasserre type Moment-SOS relaxations \cite{lasserre2015introduction}
to find global minimizers of the occurring polynomial optimization
problems in each loop of the Gauss-Seidel method.
These relaxations can solve the polynomial optimization problems globally,
even if they are nonconvex.

\item As demonstrated in section \ref{nr},
the Gauss-Seidel method works well in practice.
Moment-SOS relaxations can be used to verify if a computed solution
is a GNE or not.
There are no other numerical methods for solving GNEPPs efficiently,
especially for nonconvex ones,
to the best of the authors' knowledge.

\item We give a sufficient condition for checking if a given GNEPP is a GPG or not.
Based on it, a numerical certificate is given for checking GPGs.
This is the first numerical method that can do this,
to the best of the authors' knowledge.
\end{itemize}

The paper is organized as follows.
Some preliminaries about polynomial optimization
are given in Section~\ref{sc:pre}.
Section~\ref{sc:GSAlg} gives the Gauss-Seidel method for solving GNEPPs
and studies its properties, and the algorithm of
solving the occurring polynomial optimization problems globally in the Gauss-Seidel method.
Section~\ref{sc:GPG} focuses on generalized potential games.
Numerical experiments are given in Section~\ref{nr}.

\section{Preliminaries}
\label{sc:pre}

The symbol $\mathbb N$ stands
for the set of nonnegative integers, and $\mathbb R$ for the real field.
The norm $ \|\cdot\|$ is the standard Euclidean norm of a vector.
For a real number $t$, $\lceil t \rceil$ (resp., $\lfloor t \rfloor$)
denotes the smallest integer not smaller than $t$
(resp., the biggest integer not bigger than $t$).
%Let $N$ be the number of players.

The variable $x_i \in \re^{n_i}$
is the strategy of the $i$th player,
and $x_{i,j}$ denotes the $j$th component of $x_i$,
for $j = 1, \ldots, n_i$.
%
%The total dimension of all strategy variables is
%$n = n_1 + \cdots + n_N$.
%
In the Gauss-Seidel method, we use
$\xpik$ to denote the value of $x_i$ in the $k$th loop.
Similarly, $\xpij^{(k)}$ denotes $\xpij$ in the $k$th iteration.
In each loop of the Gauss-Seidel method,
we need to solve a polynomial optimization problem about
the $i$th player's strategy vector $x_i := (x_{i,1}, \ldots, x_{i,n_i})$.
Let $\nfR[x_i]$ denote the ring of real polynomials in $x_i$,
and for a degree $d$,
$\nfR[x_i]_d$ denotes the space of all polynomials in $x_i$
whose degrees are at most $d$.
For $x_i := (x_{i,1}, \ldots, x_{i,n_i})$
and $\af := (\af_1, \ldots, \af_{n_i}) \in \N^{n_i}$, denote
\[
x_i^\alpha := x_{i,1}^{\alpha_1} \cdots x_{i,n_i}^{\alpha_{n_i}}, \quad
|\alpha|:=\alpha_1+\cdots+\alpha_{n_i}.
\]
For an integer $d >0$, denote the set
\[
{\mathbb{N}}_d^{n_i} \, := \,
\{\alpha\in {\mathbb{N}}^{n_i}: \, \ |\alpha| \le d \}.
\]
We use $[x_i]_d$ to denote the vector of all monomials in $x_i$
and whose degree is at most $d$, ordered in the graded alphabetical ordering.
For example, if $x_i =(x_{i,1}, x_{i,2})$, then
\[
[x_i]_3 = (1,  x_{i,1}, x_{i,2}, x_{i,1}^2, x_{i,1}x_{i,2},
x_{i,2}^2, x_{i,1}^3, x_{i,1}^2x_{i,2}, x_{i,1}x_{i,2}^2, x_{i,2}^3).
\]
A polynomial $\sigma \in \re[x_i]$ is said to be a sum of squares (SOS)
if $\sigma=s_1^2+s_2^2+\dots+s_k^2$ for some polynomials $s_1,\dots,s_k\in\nfR[x_i]$.
The set of all SOS polynomials in $x_i$ is denoted as $\Sigma[x_i]$.
For a degree $d$, we denote the truncation
\[
\Sigma[x_i]_d \, := \, \Sigma[x_i] \cap \nfR[x_i]_d.
\]
For a tuple $g=(g_1,\dots,g_t)$ of polynomials in $x_i$,
its quadratic module is the set
\[
\qmod(g) =  \Sigma[x_i] +  g_1 \cdot \Sigma[x_i] + \cdots + g_t \cdot  \Sigma[x_i].
\]
The truncation of $\qmod(g)$ with degree $2d$ is the set
\[
\qmod(g)_{2d} =\Sigma[x_i]_{2d}+g_1\cdot\Sigma[x_i]_{2d-\deg(g_1)}
+\cdots+g_t\cdot\Sigma[x_i]_{2d-\deg(g_t)}.
\]
The tuple $g$ defines the basic closed semi-algebraic set
\begin{equation}
  \label{polyrep}
  \mathcal{S}(g)= \{x_i \in \nfR^{n_i}: g(x_i) \ge  0  \}.
\end{equation}
The quadratic module $\qmod(g)$ is said to be {\it archimedean}
if there exists $p\in\qmod(g)$ such that the set $\mathcal{S}(p)$ is compact.
If $\qmod(g)$ is archimedean, then $\mathcal{S}(g)$ must be compact.
Conversely, if $\mathcal{S}(g)$ is compact, say,
$\mathcal{S}(g)$ is contained in the ball $\mathcal{S}(R -\|x_i\|^2)$,
then $\qmod(g,R -\|x_i\|^2)$ is archimedean
and $\mathcal{S}(g) = \mathcal{S}(g, R -\|x_i\|^2)$.
For a polynomial $f \in \nfR[\allx]$,
if $f\in \qmod(g)$, then it is clear that $f\ge0$ on $\mathcal{S}(g)$.
The reverse is not necessarily true.
However, when $\qmod(g)$ is archimedean,
if $f > 0$ on $\mathcal{S}(g)$, then $f \in \qmod(g)$.
This conclusion is referred to as Putinar's Positivstellensatz
\cite{putinar1993positive}.
Interestingly, if $f \ge 0$ on $\mathcal{S}(g)$, we still have $f\in \qmod(g)$,
under some optimality conditions \cite{nie2014optimality}.

\section{The Gauss-Seidel method for GNEPPs }
\label{sc:GSAlg}

The Gauss-Seidel method was introduced in \cite{Facchinei2011}
for solving GNEPs. The following is the general framework of the Gauss-Seidel method.

\begin{alg}
%\caption{Gauss-Seidel Method}
\rm    \label{gs}
For the GNEP of \reff{GNEP}, do the following:
\begin{algorithmic}

\STATE Step~1. Choose a feasible starting point
 $\allx^{(0)}=(x_{1}^{(0)},\dots,x_{N}^{(0)})$, a positive regularization parameter
  $\tau^{(0)}$ and let $k:=0$.

\STATE Step~2.  If $\allx^{(k)}$ satisfies a suitable termination criterion, stop.

\STATE Step~3.  For  $i=1,\dots,N,$
compute a global minimizer $x_{i}^{(k+1)}$ of the optimization
\begin{equation}
\left\{ \begin{array}{ll}
    \min\limits_{x_{i} \in \re^{n_i}}& f_{i}(x_{1}^{(k+1)},\dots,x_{i-1}^{(k+1)},x_{i},x_{i+1}^{(k)},\dots,x_{N}^{(k)})
    +\tau^{(k)}\Vert x_{i}-x_{i}^{(k)}\Vert^2\\
\st & g_{i}(x_{1}^{(k+1)}, \dots, x_{i-1}^{(k+1)}, x_{i}, x_{i+1}^{(k)},\dots,x_{N}^{(k)})
        \ge0.
  \end{array} \right.
  \label{subopt}
\end{equation}
%\ENDFOR

% \STATE Set $\tau^{(k+1)}=\max\{\min[\tau^{(k)},\max_{i=1,\dots,N}(\Vert x^{i+1,k}-x^{i,k}\Vert)]\}$.
\STATE Step~4.  Choose a new regularization parameter
%$0\le \tau^{(k+1)}\le\tau^{(k)}.$
$\tau^{(k+1)} \in [0, \tau^{(k)}].$

\STATE Step~5. Let $\allx^{(k+1)}:=(x_{1}^{(k+1)},\dots,x_{N}^{(k+1)})$,
$k := k+1$, and go to Step~2.

\end{algorithmic}
\end{alg}

In practice, Algorithm~\ref{gs} performs well for solving GNEPPs.
It can compute equilibria for many problems.
This is demonstrated in numerical experiments in Section~\ref{nr}.
The GNEPPs are very hard to be solved by other existing methods,
to the best of the authors' knowledge.
On the other hand, Algorithm~\ref{gs} is not theoretically guaranteed
to converge for all GNEPPs.
Its convergence can be shown for some special GNEPs, such as GPGs.
In the following, we show how to implement Algorithm~\ref{gs}
when the defining functions are polynomials.
After that, we review some properties of Algorithm~\ref{gs}.

\subsection{Moment-SOS relaxations for polynomial optimization}

We discuss how to implement Algorithm~\ref{gs} when
all the objective and constraining functions are given by polynomials.
In its Step~3, the sub-optimization \reff{subopt}
is a polynomial optimization problem whose variable is $x_i \in \re^{n_i}$.
In the following, we give a brief review for
using the Lasserre type Moment-SOS hierarchy to solve (\ref{subopt}).
We refer to \cite{Las01,lasserre2015introduction,nie2013certifying,nie2014optimality}
for related work about polynomial optimization.

For an even degree $2d>0$,
let $\re^{ \N_{2d}^{n_i} }$ denote the space of all real vectors
that are labeled by $\af \in \N_{2d}^{n_i}$. Each
$y \in \re^{ \N_{2d}^{n_i} }$ is labeled as
\[
y \, = \, (y_\af)_{ \af \in \N_{2d}^{n_i} }.
\]
Such $y$ is called a
{\it truncated multi-sequence} (tms) of degree $2d$.
For a polynomial $f = \sum_{ \af \in \N^n_{2d} } f_\af x_i^\af \in \re[x]_{2d}$,
define the operation
\be \label{<f,y>}
\langle f, y \rangle = \sum_{ \af \in \N^n_{2d} } f_\af y_\af.
\ee
The operation $\langle f, y \rangle$ is linear in $y$ for fixed $f$
and it is linear in $f$ for fixed $y$.
For a polynomial $q \in \re[x_i]_{2d}$ and the integer
$t = d- \lceil \deg(q)/2 \rceil$, the outer product
$q \cdot [x_i]_t[x_i]_t^T$
is a symmetric matrix of length $\binom{n+t}{t}$.
It can be expanded as
\[
q \cdot [x_i]_t [x_i]_t^T \, = \, \sum_{ \af \in \N_{2d}^n }
x_i^\af  Q_\af,
\]
for some symmetric matrices $Q_\af$. We denote
\be \label{df:Lf[y]}
L_{q}^{(d)}[y] \, := \, \sum_{ \af \in \N_{2d}^n }
y_\af  Q_\af.
\ee
It is called the $d$th {\it localizing matrix} of $q$ and generated by $y$.
For given $q$, $L_{q}^{(d)}[y]$ is linear in $y$.
Clearly, if $q(u) \geq 0$ and $y = [u]_{2d}$, then
\[ L_{q}^{(d)}[y] = q(u) [u]_t[u]_t^T \succeq 0. \]
For instance, if $n_i=d=2$
and $q= 1 - x_{i,1}-x_{i,1}x_{i,2}$, then
\[
L_q^{(2)}[y]=\left [\begin{matrix}
y_{00}-y_{10}-y_{11} &  y_{10}-y_{20}-y_{21} &  y_{01}-y_{11}-y_{12} \\
y_{10}-y_{20}-y_{21} &  y_{20}-y_{30}-y_{31} &  y_{11}-y_{21}-y_{22} \\
y_{01}-y_{11}-y_{12} &  y_{11}-y_{21}-y_{22} &  y_{02}-y_{12}-y_{13} \\
\end{matrix}\right ].
\]
When $q=1$ is the constant one polynomial,
the localizing matrix $L_{1}^{(d)}[y]$
reduces to a moment matrix, which we denote as
\[
M_d[y] \, := \, L_{1}^{(d)}[y].
\]
When $\mathbf{q}:=(q_1,\dots,q_s)$ is a tuple of polynomials,
we then define that
\[
L_{\mathbf{q}}^{(d)}[y]:=\left [\begin{matrix}
L_{q_1}^{(d)}[y] &                   &        &\\
                 &  L_{q_2}^{(d)}[y] &        &\\
                 &                   &  \ddots& \\
                 &                   &        &L_{q_s}^{(d)}[y]\\
\end{matrix}\right ].
\]

In the following, we discuss how to solve (\ref{subopt}).
For convenience, denote
\be \label{simsubopt}
\left\{ \baray{l}
f_i^{(k)} :=f_{i}(x_{1}^{(k+1)},\dots,x_{i-1}^{(k+1)},x_{i},x_{i+1}^{(k)},\dots,x_{N}^{(k)})
+\tau^{(k)}\Vert x_{i}-x_{i}^{(k)}\Vert^2,\\
g_i^{(k)} :=g_{i}(x_{1}^{(k+1)}, \dots, x_{i-1}^{(k+1)}, x_{i}, x_{i+1}^{(k)},\dots,x_{N}^{(k)}).
\earay \right.
\ee
They are polynomials in $x_i$.
One can rewrite (\ref{subopt}) equivalently as
\be  \label{pop:fkgk}
\left\{
\baray{rcl}
\vartheta_{\min}= & \min\limits_{x_i \in \re^{n_i} } & f_i^{(k)}(x_i) \\
& \st & g_i^{(k)}(x_i)\ge 0.
\earay
\right.
\ee
Denote the degree
\[
d_0:=\max\{\lceil\deg(f_i^{(k)})/2\rceil,\lceil\deg(g_i^{(k)})/2\rceil\}.
\]
For $d=d_0,d_0+1,\ldots$, the $d$th moment relaxation for \reff{pop:fkgk} is
\be
\label{d-mom}
\left\{
\baray{rl}
\vartheta_d  \, :=  \,\min\limits_{ y } & \lip f_i^{(k)},y \rip\\
 \st & M_d[y] \succeq 0, \, L_{g_i^{(k)}}^{(d)} \succeq 0, \\
  & y_0=1, y\in\mathbb{R}^{\mathbb{N}^{n_i}_{2d}}.
\earay
\right.
\ee
Its dual optimization problem is the SOS relaxation
\be
\label{d-sos}
\left\{
\baray{ll}
\max & \gamma\\
\st & f_i^{(k)} -\gamma \in  \qmod(g_i^{(k)})_{2d}. \\
\earay
\right.
\ee
We refer to Section~\ref{sc:pre} for the truncated quadratic module
$\qmod(g_i^{(k)})_{2d}$.
By solving the relaxations \reff{d-mom}-\reff{d-sos}
for $d=d_0, d_0+1, \ldots$, we get the Moment-SOS hierarchy
for solving \reff{pop:fkgk}.
The following is the algorithm.

\begin{alg} \label{lh} \rm
(The Moment-SOS hierarchy for solving (\ref{pop:fkgk})).
Let $f_i^{(k)}, g_i^{(k)}$ be as in \reff{simsubopt}.
Start with $d := d_0$.
\begin{algorithmic}

\STATE Step~1. Solve the semidefinite relaxation (\ref{d-mom}).
If (\ref{d-mom}) is infeasible, then \reff{pop:fkgk}
has no feasible points and stop;
otherwise, solve it for a minimizer $y^*$ and let $t:=d_1$,
where $d_1 := \lceil \deg(g_i^{(k)})/2 \rceil$.

\STATE Step~2. If $y^*$ satisfies the rank condition
\be \label{flatrank}
\Rank{M_t[y^*]}=\Rank{M_{t-d_1}[y^*]} ,
\ee
then extract $r :=\Rank{M_t(y^*)}$ minimizers for (\ref{pop:fkgk}) and stop.

\STATE Step~3. If \reff{flatrank} fails to hold and $t<d$,
let $t := t+1$ and then go to Step~2;
otherwise, let $d := d+1$ and go to Step~1.
\end{algorithmic}

\end{alg}

The rank condition \reff{flatrank} is called {\it flat truncation}
in the literature \cite{nie2013certifying}.
It is a sufficient (and almost necessary) condition for checking convergence
of the Moment-SOS hierarchy.
Flat truncation is useful for solving truncated moment problems
and linear optimization with moment constraints
\cite{HelNie12,FiaNie12,linmomopt}.
Indeed, the Moment-SOS hierarchy has finite convergence
if and only if the flat truncation is satisfied for some relaxation order,
under some generic conditions~\cite{nie2013certifying}.
When \reff{flatrank} holds, the method in \cite{HenLas05}
can be used to extract $r$ minimizers for \reff{pop:fkgk}.
The method is implemented in the software {\tt GloptPoly~3}~\cite{GloPol3}.
We refer to \cite{HenLas05}, \cite{nie2013certifying} and
\cite[Chapter 6]{lasserre2015introduction} for more details.

The convergence properties of Algorithm~\ref{lh} are as follows.
By solving the hierarchy of relaxations \reff{d-mom}-\reff{d-sos},
we can get a monotonically increasing
sequence of lower bounds $\{\vartheta_d\}_{d=d_0}^{\infty}$
for the minimum value $\vartheta_{\min}$, i.e.,
\[
\vartheta_{d_0} \le \vartheta_{d_0+1} \le \cdots \le \vartheta_{\min}.
\]
When $\qmod(g_i^{(k)})$ is archimedean, we have
$\vartheta_d \to \vartheta_{\min}$ as $d \to \infty$,
as shown in \cite{Las01}.
If $\vartheta_d = \vartheta_{\min}$ for some $d$,
the relaxation (\ref{d-mom}) is said to be exact (or tight)
for solving (\ref{subopt}). For such a case,
the Moment-SOS hierarchy is said to have finite convergence.
The Moment-SOS hierarchy has finite convergence
when the archimedean and some optimality conditions hold~\cite{nie2014optimality}.
Although there exist special polynomials such that
the Moment-SOS hierarchy fails to have finite convergence,
such special problems belong to a set of measure zero in the space of
input polynomials \cite{nie2014optimality}.

\subsection{Some properties of Algorithm~\ref{gs}}

Although Algorithm~\ref{gs} converges for many problems,
it is possible that it does not converge for some special ones.
For instance, it is possible that (\ref{subopt}) becomes infeasible
after some loops even if the starting point $\allx^{(0)}$ is feasible.
The following is such an example.

\begin{example}
\rm  \label{nonfeasible}
Consider the $2$-player GNEP
\begin{equation}
\begin{array}{ccccc}
\min\limits_{x_1 \in \re^1 } & -x_1-x_2 & \vline &
\min\limits_{x_2 \in \re^1} & x_1 x_2 \\
\st & 0\le x_1\le 2&\vline&s.t.&x_1+(x_2)^2 \le 1  .
\end{array}
\end{equation}
If Algorithm~\ref{gs} begins with $(x_1^{(0)}, x_2^{(0)})=(0,1)$
and uses the constant $\tau^{(k)} =0.05$,
then $x_1^{(1)} = 2$ and (\ref{subopt}) is infeasible for $k=1$ and $i=2$.
\end{example}

When a GNEP has a shared constraint, i.e.,
there exists a set $X \subseteq \nfR^n$ such that
$\Xpi(\xmi)=\{\xpi:(\xpi,\xmi)\in X\}$
for all players, then the suboptimization (\ref{subopt}) is feasible for all $k$,
provided that the initial point $\allx^{(0)}$ is feasible \cite{Facchinei2011}.
Beyond the concern of infeasibility, the sequence of $\allx^{(k)}$
produced by Algorithm~\ref{gs} might be alternating and does not converge.
Let's see the following example.

\begin{example}
\rm \label{alterep}
Consider the $2$-player GNEP
\be
\begin{array}{ccccc}
\min\limits_{ x_{1} \in \re^1 }& x_1 & \vline & \min\limits_{ x_2 \in \re^1 } & x_1 x_2\\
\st & x_1\ge x_2 &\vline& \st & (x_1)^2+(x_2)^2=2 .
\end{array}
\ee
If Algorithm~\ref{gs} starts with $(x_1^{(0)},x_2^{(0)})=(1,1)$
and uses the constant $\tau^{(k)} =0.001$,
the sub-optimization (\ref{subopt}) for the first player is
\[
\left\{ \begin{array}{rl}
\min\limits_{x_{1} \in \re^1 }&x_1+0.001(x_1-1)^2\\
\st & x_1\ge 1 .
\end{array} \right.
\]
Its minimizer $x_1^{(1)}=1$. After plugging $(x_1^{(1)},x_2^{(0)})$
into (\ref{subopt}), the sub-optimization (\ref{subopt}) for the second player is
\[
\left\{ \begin{array}{rl}
\min\limits_{ x_{2} \in \re^1 }& x_2+0.001(x_2-1)^2 \\
\st & x_2^2=1 ,
\end{array} \right.
\]
whose minimizer $x_2^{(1)} = -1$.
After one iteration, Algorithm~\ref{gs} produced the point
$\allx^{(1)} = (1,-1)$. For the loop of $k=1$,
the sub-optimization problem (\ref{subopt}) for the first player is
\[
\left\{ \begin{array}{rl}
\min\limits_{x_{1}\in \re^1} & x_1+0.001(x_1-1)^2 \\
\st & x_1\ge -1 ,
\end{array} \right.
\]
whose minimizer $x_1^{(2)}=-1$,
and the sub-optimization (\ref{subopt}) for the second player is
\[
\left\{ \begin{array}{rl}
\min\limits_{x_{2}\in \re^1} & -x_2+0.001(x_2+1)^2\\
\st & x_2^2=1,
\end{array} \right.
\]
whose minimizer $x_2^{(2)}=1$. So,  $(x_1^{(2)},x_2^{(2)})=(-1,1)$.
Continuing this process,
one can show that $\allx^{(k)}$ is alternating in the pattern
\[
(1,1)\longrightarrow(1,-1)\longrightarrow(-1,-1)\longrightarrow(-1,1)
\longrightarrow (1,1) \longrightarrow \cdots.
\]
Algorithm \ref{gs} does not converge for this GNEP.
\end{example}

We would like to remark that even for the case that
Algorithm~\ref{gs} converges, the limit of $\allx^{(k)}$
is not necessarily a GNE for \reff{GNEP}.
This is shown in the following example.

\begin{example}
\label{ep:convnotsol}
\rm
Consider the GNEP in \reff{convnotsol}.
For the first player, when $x_2>0$, its feasible set is
$x_1\ge x_2+1$, so the sub-optimization \reff{subopt} in the $k$th loop is
\be \label{p1subopt}
\left\{ \begin{array}{cc}
\min\limits_{x_{1} \in \re^1 } & x_1 + \tau^{(k)} (x_1-x_1^{(k)})^2\\
\st & x_1\ge1+x_2^{(k)}.
\end{array} \right.
\ee
For $0 < \tau^{(k)} <0.5$, the minimizer of (\ref{p1subopt}) is $1+x_2^{(k)}$.
For the second player, the sub-optimization (\ref{subopt}) in the $k$th loop  is
\be
    \label{p2subopt}
\left\{ \begin{array}{rl}
      \min\limits_{x_{2} \in \re^1 } & (x_2)^2-x_2x_2^{(k)}
                      +\tau^{(k)}(x_2-x_2^{(k)})^2 \\
      \st & (x_2)^2\le 3-(x_2^{(k)}+1)^2, \, x_2 \ge 0.
    \end{array} \right.
\ee
When (\ref{p2subopt}) is feasible, its minimizer is
\[
\min\left\{\frac{1+2\tau^{(k)}}{2+2\tau^{(k)}}x_2^{(k)},
\sqrt{3-(x_2^{(k)}+1)^2} \right\}.
\]
Therefore, for any constant $0 < \tau^{(k)} <0.5$ or a decreasing
$\tau^{(k)}$ with $\tau^{(0)} < 0.5$, if $0<x_2^{(0)}\le \sqrt{3}-1$
(to make (\ref{p2subopt}) feasible),
then $\allx^{(k)} \to (1,0)$ as $k\to\infty$.
However,  $(1,0)$ is not a GNE, because when $x_2=0$,
$x_1=0$ is feasible and it is the minimizer.
Indeed, $(0,0)$ is a GNE. 
This shows that a limit point produced by Algorithm~\ref{gs}
is not necessarily a GNE.
\end{example}

In practice, however, the performance of Algorithm~\ref{gs} is good.
%is not that bad.
Under certain conditions, Algorithm~\ref{gs} converges
and the limit is a GNE. This requires some assumptions
on the feasible sets of \reff{subprob}.
Let $G$ be a set-valued map defined on a set $U$, i.e.,
$G(x)$ is a subset of a range $Y$, for all $x \in U$.
Its domain, $\mbox{dom}\,G$,  is the set of $x\in U$ such that
$G(x)\ne\emptyset$ \cite{aubin2009set}.
The map $G$ is said to be
{\it inner semicontinuous} at $x\in U$ relative to $\mbox{dom}\, G$
if for all $y\in G(x)$ and
for all sequences $\{x_\ell\}\subseteq \mbox{dom}\, G$
such that that $ x_\ell \to x$, there exists  a sequence of
$y_\ell \in G(x_{\ell} )$ converging to $y$.
The map $G$ is called inner semicontinuous
relative to $\mbox{dom}\, G$ if it is inner semicontinuous
relative to $\mbox{dom}\, G$ at every point in $\mbox{dom}\, G$.
For instance, %in the GNEP \reff{GNEP},
if the set $\Xpi(x_{-i}) = \{\xpi:(\xpi,\xmi)\in C_i \}$
for $C_i\subseteq \re^n$ being a polyhedron or a ball,
then the set-valued map $x_{-i} \mapsto \Xpi(x_{-i})$
is inner semicontinuous relative to its domain at all points $x_{-i}$
\cite{rockafellar2009variational}.
However, for the GNEP in \reff{convnotsol},
the set-valued map $x_2\to X_1(x_2)$ is not inner semicontinuous at $(0,0)$
(see the end of this section).
We refer to \cite{Bbank1982,rockafellar2009variational}
for the inner semicontinuity of set-valued maps.
The following is a useful lemma about inner semicontinuity.

\begin{lemma}
\label{proconv}
For two closed sets $U$ and $V$, let $f: U \times V \to \re$
and $h: U \times V \to \re^m$ be two continuous functions.
For $y \in V$, define the set-valued map
\[
G(y)=\{x \in U: h(x,y)\ge0\}.
\]
Consider two sequences $\{ x^{(k)} \} \subseteq \mbox{dom}\,G$
and $\{ y^{(k)} \} \subseteq V$ such that
$x^{(k)} \to x^*$ and $y^{(k)} \to y^*$.
Suppose $0\le\tau^{(k)} \to 0$ as $k \to \infty$.
Assume that each $x^{(k)}$ is a minimizer of the optimization problem
\begin{equation}
  \label{kpro}
\left\{ \begin{array}{ll}
\min\limits_{x \in U} & f(x,y^{(k)})+\tau^{(k)}\Vert x-x^{(k-1)}\Vert^2, \\
\st & h(x,y^{(k)})\ge 0 .
\end{array} \right.
\end{equation}
If the set-valued map $G(y)$ is inner semicontinuous relative to $\dom G$,
then $x^*$ is also a minimizer of
\begin{equation}
\label{limpro}
\left\{ \begin{array}{ll}
\min\limits_{x \in U} & f(x,y^*) \\
\st & h(x,y^*) \ge 0 .
\end{array} \right.
\end{equation}
\end{lemma}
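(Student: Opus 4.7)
The plan is to mimic the standard argument for continuity of the minimum value under inner semicontinuous constraints, together with the usual trick of absorbing the proximal term $\tau^{(k)}\|x-x^{(k-1)}\|^2$ into the limit. I would proceed in three stages: establish feasibility of the limit point, transport an arbitrary competitor through the inner semicontinuity hypothesis, and pass to the limit in the optimality inequality.

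First I would verify that $x^* \in G(y^*)$, so that $y^* \in \dom G$ and problem \reff{limpro} is well-posed. Since each $x^{(k)}$ solves \reff{kpro}, in particular $h(x^{(k)}, y^{(k)}) \ge 0$. Continuity of $h$ together with $x^{(k)} \to x^*$ and $y^{(k)} \to y^*$ then gives $h(x^*,y^*) \ge 0$, hence $x^*$ is feasible for \reff{limpro}. Also, each $y^{(k)} \in \dom G$ because $x^{(k)} \in G(y^{(k)})$, which is the setup needed to apply the inner semicontinuity of $G$ at $y^*$.

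Next, pick an arbitrary competitor $\tilde x \in G(y^*)$; I want to show $f(x^*,y^*) \le f(\tilde x,y^*)$. By inner semicontinuity of $G$ at $y^*$ relative to $\dom G$, applied along the sequence $\{y^{(k)}\} \subseteq \dom G$ with $y^{(k)} \to y^*$, there exists $\tilde x^{(k)} \in G(y^{(k)})$ with $\tilde x^{(k)} \to \tilde x$. The optimality of $x^{(k)}$ in \reff{kpro} then yields
\[
f(x^{(k)},y^{(k)}) + \tau^{(k)} \|x^{(k)} - x^{(k-1)}\|^2
\;\le\;
f(\tilde x^{(k)},y^{(k)}) + \tau^{(k)} \|\tilde x^{(k)} - x^{(k-1)}\|^2.
\]
Both sequences $\{x^{(k)}\}$ and $\{\tilde x^{(k)}\}$ are convergent, hence bounded, so both proximal terms vanish because $\tau^{(k)} \to 0$. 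Continuity of $f$ on $U\times V$ then gives $f(x^{(k)},y^{(k)}) \to f(x^*,y^*)$ and $f(\tilde x^{(k)}, y^{(k)}) \to f(\tilde x,y^*)$, and passing to the limit yields $f(x^*,y^*) \le f(\tilde x,y^*)$. Since $\tilde x \in G(y^*)$ was arbitrary, $x^*$ is a minimizer of \reff{limpro}.

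The only delicate step is the second one: inner semicontinuity is precisely what is needed to promote an arbitrary $\tilde x \in G(y^*)$ to a sequence of feasible competitors $\tilde x^{(k)} \in G(y^{(k)})$ against which we can test the optimality of $x^{(k)}$. Without this hypothesis one would only know that feasibility is preserved in the limit (outer semicontinuity), which is insufficient to transport test points backward along the iterates. Once inner semicontinuity is in hand, the rest is routine continuity and the observation that bounded sequences kill the vanishing regularization weights.
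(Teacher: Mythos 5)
Your proof is correct and follows essentially the same argument as the paper: inner semicontinuity supplies feasible competitors $\tilde x^{(k)}\in G(y^{(k)})$ converging to an arbitrary $\tilde x\in G(y^*)$, and the optimality inequality for $x^{(k)}$ passes to the limit since the proximal terms vanish. The only differences are cosmetic—you argue directly rather than by contradiction, and you explicitly check feasibility of $x^*$ (a point the paper leaves implicit)—both of which are fine.
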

\begin{proof}
We prove it by a contradiction argument.
Suppose otherwise that $x^*$ is not a minimizer of
\reff{limpro}, then there exists $z^* \in G(y^*)$ such that
\be  \label{fz*<fx*}
 f(z^*, y^*) < f(x^*, y^*).
\ee
Since the mapping $ G $ is  inner semicontinuous,
there exists a sequence of $z^{(k)}$ such that
$z^{(k)} \rightarrow z^*$  and  $ z^{(k)} \in G(y^{(k)}).$
The sequence $ \{z^{(k)}\}$ is clearly bounded.
Because $ x_{k} $ is a minimizer of
\begin{equation*}
  \begin{array}{ll}
 %\nonumber to remove numbering (before each equation)
  \min & f(x, y^{(k)})+\tau^{(k)} \|x-x^{(k-1)} \|^2 \\
  s. t. & x \in G(y^{(k)}),
\end{array}
\end{equation*}
we have that
\begin{equation}\label{1}
   f(z^{(k)}, y^{(k)})+\tau^{(k)} \|z^{(k)}-x^{(k-1)} \|^2  \geq
   f(x^{(k)}, y^{(k)})+\tau^{(k)} \|x^{(k)}-x^{(k-1)} \|^2.
\end{equation}
Because $ f(x, y) $ is continuous, it holds that
\[
f(x^{(k)}, y^{(k)})\rightarrow f(x^*, y^*), \quad
f(z_{k}, y_{k})\rightarrow f(z^*, y^*)
\]
as $k \to \infty$. For all $\varepsilon >0$,
there exists $K_{1}$ such that
\begin{equation*}
  \begin{array}{ccc}
% \nonumber to remove numbering (before each equation)
  f(x^{(k)}, y^{(k)})-f(x^*, y^*) & > & -\frac{\varepsilon}{4}, \\
  f(z^{(k)}, y^{(k)})-f(z^*, y^*) & < & \frac{\varepsilon}{4}
  \end{array}
  \end{equation*}
for all $ k > K_{1}.$ Combining the two inequalities, we can  get
\[
 f(x^{(k)}, y^{(k)})- f(z^{(k)}, y^{(k)})+f(z^*, y^*)-f(x^*, y^*)  >  -\frac{\varepsilon}{2}.
\]
Therefore, we have
\begin{multline}\label{2}
f(z^*, y^*)-f(x^*, y^*) +\frac{\varepsilon}{2} >f(z^{(k)}, y^{(k)})-f(x^{(k)}, y^{(k)}) \\
\ge \tau^{(k)} \Big( \|x^{(k)}-x^{(k-1)} \|^2 - \|z^{(k)}-x^{(k-1)} \|^2 \Big).
\end{multline}
The last inequality follows from (\ref{1}).
Because $ \{x^{(k)} \} $, $ \{ z^{(k)} \}$ are convergent sequences
and $\tau^{(k)} \to 0 $, there must exist $ K_{2}$ such that
\[
\tau^{(k)}( \|x^{(k)}-x^{(k-1)} \|^2 - \|z^{(k)}-x^{(k-1)} \|^2)
>  -\frac{\varepsilon}{2}
\]
whenever $ k> K_{2}.$  Let $K :=\max\{K_{1}, K_{2}\}$, then for all $ k>K$
\[
    f(z^*, y^*)-f(x^*, y^*) +\varepsilon > 0.
\]
Since $ \varepsilon $ can be arbitrarily small, the above implies that
\[
f(z^*, y^*)-f(x^*, y^*) \geq  0,
\]
which contradicts \reff{fz*<fx*}.
Therefore, $x^*$ is a minimizer of \reff{limpro}.
\end{proof}

Lemma~\ref{proconv} immediately implies the following result.

\begin{theorem}  \label{th:limissol}
Let $\allx^{(k)}$ be the sequence produced by Algorithm \ref{gs}
for the GNEP of \reff{GNEP}. Assume that
$\allx^{(k)} \to \allx^*$ and $\tau^{(k)}\to 0$.
If for each $i$ the set-valued map
$G_i: x_{-i} \mapsto X_i( x_{-i} )$ is inner semicontinuous relative to its domain
$\dom G_i$, then the limit $\allx^*$ is a GNE for the GNEP of \reff{GNEP}.
\end{theorem}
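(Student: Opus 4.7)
My plan is to reduce Theorem \ref{th:limissol} to a clean application of Lemma \ref{proconv}, applied separately to each player $i \in \{1, \ldots, N\}$. To show that $\allx^*$ is a GNE, I must show that for each $i$ the component $x_i^*$ is a minimizer of
\[
\min_{x_i \in \re^{n_i}} \; f_i(x_i, x_{-i}^*) \quad \text{s.t.} \quad g_i(x_i, x_{-i}^*) \ge 0,
\]
which is exactly problem \reff{limpro} in Lemma \ref{proconv} under the identification $U = \re^{n_i}$, $V = \re^{n-n_i}$, $f = f_i$, $h = g_i$, $y^* = x_{-i}^*$, $x^* = x_i^*$, and $G(y) = \{x_i : g_i(x_i,y) \ge 0\} = X_i(y) = G_i(y)$.

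With $i$ fixed, I would set
\[
y^{(k)} := (x_1^{(k+1)}, \ldots, x_{i-1}^{(k+1)}, x_{i+1}^{(k)}, \ldots, x_N^{(k)}), \qquad x^{(k)} := x_i^{(k+1)}.
\]
By Step~3 of Algorithm~\ref{gs}, $x^{(k)}$ is a global minimizer of problem \reff{kpro} with data $(f_i, g_i, y^{(k)}, \tau^{(k)}, x_i^{(k)})$; in particular the subproblem is feasible, so $y^{(k)} \in \dom G_i$. Since $\allx^{(k)} \to \allx^*$ yields componentwise convergence, $x_j^{(k)} \to x_j^*$ and $x_j^{(k+1)} \to x_j^*$ for every $j$, so $y^{(k)} \to x_{-i}^*$ and $x^{(k)} \to x_i^*$. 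Together with the hypotheses $\tau^{(k)} \to 0$ and the inner semicontinuity of $G_i$ relative to $\dom G_i$, all assumptions of Lemma~\ref{proconv} are in place, so $x_i^*$ solves the $i$-th player's problem at $x_{-i}^*$. Letting $i$ range over $\{1, \ldots, N\}$ gives the GNE property.

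The main (and essentially only) obstacle is the bookkeeping of the "mixed" iterate $y^{(k)}$, which interleaves components from iteration $k{+}1$ (for indices before $i$) and iteration $k$ (for indices after $i$), and the verification that $y^{(k)} \in \dom G_i$. The former is resolved by componentwise convergence of $\allx^{(k)}$ to $\allx^*$; the latter is automatic because Algorithm~\ref{gs} actually produced a feasible $x_i^{(k+1)}$ in that loop. Feasibility of the limit $x_i^* \in X_i(x_{-i}^*)$ then follows from continuity of $g_i$ applied to $g_i(x_i^{(k+1)}, y^{(k)}) \ge 0$. Everything else amounts to a direct substitution into Lemma~\ref{proconv}.
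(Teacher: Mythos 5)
Your proposal is correct and follows exactly the route the paper intends: the paper states that Lemma~\ref{proconv} ``immediately implies'' Theorem~\ref{th:limissol}, and your argument is precisely the careful spelling-out of that implication, applying the lemma once per player with $y^{(k)}$ the interleaved vector of the other players' iterates and $x^{(k)}=x_i^{(k+1)}$. Your added checks (that $y^{(k)}\in\dom G_i$ because the subproblem was solvable, and that $x_{-i}^*\in\dom G_i$ by continuity of $g_i$) are exactly the bookkeeping the paper leaves implicit.
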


\begin{remark}
Theorem~\ref{th:limissol} assumes that the sequence of $\allx^{(k)}$
produced by Algorithm~\ref{gs} converges. However,
the theorem does not give a sufficient condition for this sequence to converge.
To ensure convergence, we need to assume the GNEPs are GPGs;
see Theorems~\ref{gpgconv1} and \ref{gpgconv2}.
There exists a convergence result \cite[Lemma 1]{Sagratella2017algorithms}
that is similar to Lemma~\ref{proconv} and Theorem~\ref{th:limissol}.
\end{remark}

In the proof of Lemma \ref{proconv}, it is required that
$\tau^{(k)}\to 0$, which is also assumed in Theorem \ref{th:limissol}.
However, in the implementation of Algorithm \ref{gs},
we do not need $\tau^{(k)}\to 0$. Sometimes, a constant
$\tau^{(k)}$ works very well.
We refer to Theorem~\ref{gpgconv2} and examples in Section~\ref{nr}.

For Examples~\ref{nonfeasible} and \ref{alterep},
Algorithm~\ref{gs} does not produce a convergent sequence.
For Example~\ref{ep:convnotsol}, the set-valued map $G_1:x_2\mapsto X_1(x_2)$
for the first player is not inner semicontinuous relative to its domain
$\dom G_1$. In fact, at the point $(x_1,x_2)=(0,0)$,
it is clear that $x_1\in G_1(x_2)$.
However, for every sequence $\{x_2^{(k)}\}$ such that
$0<x_2^{(k)}\to x_2=0$, $G_1(x_2^{(k)})=[x_2^{(k)}+1,\infty).$
Since each $x_2^{(k)}>0$, there does not exist a sequence
$\{x_1^{(k)}\}$ converging to $x_1=0$ and
$x_1^{(k)}\in G_1(x_2^{(k)})=[x_2^{(k)}+1,\infty)$.
Therefore, the inner semicontinuity assumption in Theorem~\ref{th:limissol}
fails for Example~\ref{ep:convnotsol}.

\section{Generalized potential games}
\label{sc:GPG}

The Gauss-Seidel method is frequently used for solving GNEPs.
However, its convergence is not guaranteed for all of them.
One wonders for what kind of GNEPs the Gauss-Seidel method converges.
The generalized potential game (GPG) is such a GNEP.
The following is the definition of GPGs in \cite{Facchinei2011}.

\begin{definition}
(\cite{Facchinei2011}) \label{defgpg}
The GNEP of \reff{GNEP} is a generalized potential game if:
\begin{enumerate}

\item [(i)]
\label{df:GPGitem1}
There exists a closed set $\emptyset \ne X \subseteq \mathbb{R}^n$ such that
\[ \Xpi(\xmi)\equiv\{\xpi\in D_{i}:(\xpi,\xmi)\in X\} \]
for all players, where each $D_{i}\subseteq \mathbb{R}^{n_i}$
is a closed set such that $(D_1 \times \cdots \times D_N) \cap X\ne \emptyset$.

\item [(ii)] There exist a continuous function $P(x):\mathbb{R}^n \to\mathbb{R}$
and a forcing function $\sigma:\mathbb{R}_{+}\to\mathbb{R}_+$
(i.e., $\lim_{k\to\infty}\sigma(t_k)=0$ implies $\lim_{k\to\infty}t_k=0$)
such that for all $y_{i},x_{i}\in X_i(\xmi)$
\be
\label{GPGcon2}
  \baray{c}
\fpi(y_i,\xmi)-\fpi(x_i,\xmi)>0 \quad  \Longrightarrow \\
P(y_{i},\xmi)-P(x_{i},\xmi)\ge\sigma(\fpi(y_{i},\xmi)-\fpi(x_{i},\xmi)).
\earay
\ee
\end{enumerate}
\end{definition}

The item (i) in Definition \ref{df:GPGitem1}
is from the concept of {\it shared constraint} \cite{Facchinei2007}.
It implies that if $\allx^{(0)}$ is feasible,
then the sub-optimization problem (\ref{subopt})
is feasible for all $k$ and $i$.
The item (ii) means that there exists a single ``dominant function"
$P$ that measures the changes on
each player's objective functions \cite{Facchinei2011}.

Some special GNEPs can be directly verified as GPGs.
For instance, for the GNEP of \reff{GNEP},
if each objective $f_i$ can be expressed as
\be\label{eq:classicgpg}
f_i(\allx) = f_0(\allx)+\sum_{j=1}^M f_{i,j}(x_j)
\ee
for some functions $f_0$ and $f_{i,j}$ and the item (i) holds,
then the GNEP of \reff{GNEP} is a GPG because $P, \sigma$ can be chosen as
\[
P(\allx) = f_0(\allx) + \sum_{i=1}^N \sum_{j=1}^M f_{i,j}(x_j),
\quad \sigma(t)=t.
\]
One can easily check that the above $P(x)$ and $\sigma(t)$ 
satisfy (\ref{GPGcon2}) \cite{Monderer1996}.

GPGs are extensions of potential games, which were originally
introduced for NEPs \cite{Monderer1996}.
They have broad applications \cite{Neel2002}.
The following is an example of GPG arising from applications.

\begin{example}\rm
  \label{gpgpollute}
The GNEPP from the environmental pollution control, described
%in example \ref{pollute}
in the introduction, is a GPG.
The functions $P$ and $\sigma$ can be chosen as
\[
 \begin{array}{rl}
    P(\allx)=& 2\prod\limits_{i=1}^N (x_{i,0}-
     \sum\limits_{j=1}^N\gamma_{j,i}x_{j,i})+
\sum\limits_{i=1}^N\left[\sum\limits_{j=0}^{N}x_{i,j}
-\sum\limits_{j=1}^N\gamma_{j,i}x_{j,i}-x_{i,0}(b_i-1/2x_{i,0})\right], \\
    \sigma(t)=& t.
\end{array}
\]
The numerical results of Algorithm \ref{gs} are shown in the next section.
\end{example}

%Facchinei et al.~\cite{Facchinei2011}
%introduced the definition of generalized potential games for GNEPs.
The following is the convergence result for Algorithm~\ref{gs}
when it is applied to solve GPGs.

\begin{theorem}(\cite[Theorem~5.2]{Facchinei2011})  \label{gpgconv1}
Consider the GNEP of \reff{GNEP} such that all the functions are continuous.
Assume that \reff{GNEP} is a GPG and each set-valued map
$G_i: x_{-i} \mapsto X_i(x_{-i})$ is inner semicontinuous
relative to its domain. In Algorithm \ref{gs},
suppose each $x_i^{(k+1)}$ is a minimizer of \reff{subopt}
and the parameters $\tau^{(k)}$ are updated as
\be \label{update:tau}
\tau^{(k+1)} \, := \, \max \Big\{
\min \Big[\tau^{(k)},\max_{i=1,\dots,N}
(\Vert x_{i}^{(k+1)}-x_{i}^{(k)}\Vert) \Big], 0.1\tau^{(k)} \Big\}.
\ee
Then every limit point of the sequence $\{\allx^{(k)}\}_{k=0}^{\infty}$
produced by Algorithm \ref{gs} is a GNE for \reff{GNEP}.
\end{theorem}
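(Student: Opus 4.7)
My plan has three main steps: (1) show the potential $P$ decreases monotonically along the iterates; (2) deduce from the update rule \reff{update:tau} that $\tau^{(k)}\to 0$ and consecutive step sizes vanish; (3) apply Lemma~\ref{proconv} player by player to conclude that the limit point is a GNE.

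For (1), I would write the intermediate iterate $\allx^{(k,i)} := (x_1^{(k+1)},\ldots,x_i^{(k+1)},x_{i+1}^{(k)},\ldots,x_N^{(k)})$, so that $\allx^{(k,0)} = \allx^{(k)}$ and $\allx^{(k,N)} = \allx^{(k+1)}$. By the shared-constraint item (i) of Definition~\ref{defgpg}, $x_i^{(k)}$ remains feasible for the $i$th subproblem at step $k$, so optimality of $x_i^{(k+1)}$ in \reff{subopt} yields
\[
f_i(\allx^{(k,i-1)}) - f_i(\allx^{(k,i)}) \, \geq \, \tau^{(k)}\|x_i^{(k+1)} - x_i^{(k)}\|^2 \, \geq \, 0.
\]
Item (ii) of Definition~\ref{defgpg} then gives $P(\allx^{(k,i-1)}) - P(\allx^{(k,i)}) \geq \sigma(f_i(\allx^{(k,i-1)}) - f_i(\allx^{(k,i)})) \geq 0$, and telescoping over $i$ yields $P(\allx^{(k+1)}) \leq P(\allx^{(k)})$. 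If $\allx^{(k_\ell)} \to \allx^*$, continuity of $P$ combined with monotone descent promotes this to $P(\allx^{(k)}) \to P(\allx^*)$ on the full sequence; hence per-iteration decreases vanish, the forcing property of $\sigma$ forces $f_i(\allx^{(k,i-1)}) - f_i(\allx^{(k,i)}) \to 0$, and consequently $\tau^{(k)}\|x_i^{(k+1)} - x_i^{(k)}\|^2 \to 0$ for every $i$.

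For (2), \reff{update:tau} always obeys $0.1\tau^{(k)} \leq \tau^{(k+1)} \leq \tau^{(k)}$ and $\tau^{(k+1)} \geq \min\bigl(\tau^{(k)},\max_i\|x_i^{(k+1)} - x_i^{(k)}\|\bigr)$. Since $\tau^{(k)}$ is non-increasing, set $\tau^*:=\lim\tau^{(k)}$. If $\tau^*>0$, then $\|x_i^{(k+1)} - x_i^{(k)}\| \to 0$, so eventually $\max_i\|x_i^{(k+1)} - x_i^{(k)}\| < 0.1\tau^{(k)}$, and \reff{update:tau} forces $\tau^{(k+1)} = 0.1\tau^{(k)}$, contradicting $\tau^*>0$. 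Hence $\tau^*=0$. The $\min$-inequality then rules out $\max_i\|x_i^{(k+1)} - x_i^{(k)}\|$ having a positive limit superior, since otherwise $\tau^{(k+1)} = \tau^{(k)}$ would hold whenever $\tau^{(k)}$ drops below that threshold, contradicting $\tau^{(k)} \to 0$; thus $\max_i\|x_i^{(k+1)} - x_i^{(k)}\| \to 0$.

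For (3), I would apply Lemma~\ref{proconv} to each player $i$ separately. Along the subsequence $k_\ell$, vanishing step sizes ensure $(x_i^{(k_\ell+1)}, x_{-i}^{(k_\ell,i-1)}, x_i^{(k_\ell)}) \to (x_i^*, x_{-i}^*, x_i^*)$; step (2) gives $\tau^{(k_\ell)} \to 0$; and inner semicontinuity of $G_i$ is hypothesized. Lemma~\ref{proconv} therefore certifies $x_i^*$ as a minimizer of $f_i(\,\cdot\,,x_{-i}^*)$ over $X_i(x_{-i}^*)$ for every $i$, so $\allx^*$ is a GNE for \reff{GNEP}. The main obstacle is step (2): the bound $\tau^{(k)}\|x_i^{(k+1)} - x_i^{(k)}\|^2 \to 0$ becomes useless once $\tau^{(k)}\to 0$, and one must exploit the specific $\min$-clause of \reff{update:tau}, which directly couples the step size to $\tau^{(k+1)}$ and thereby forces the two to vanish together.
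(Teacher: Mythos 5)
This theorem is quoted from \cite[Theorem~5.2]{Facchinei2011}; the paper itself gives no proof, so I am judging your argument on its own terms. Your overall architecture (potential descent via the forcing function, analysis of the update rule \reff{update:tau}, then Lemma~\ref{proconv} player by player) is the right one and matches the spirit of the cited proof. Steps (1) and the first half of (2) are essentially sound, modulo one small point you should make explicit: when $f_i(\allx^{(k,i-1)})-f_i(\allx^{(k,i)})=0$ the implication \reff{GPGcon2} gives you nothing, and you must instead use $\tau^{(k)}\ge 0.1^k\tau^{(0)}>0$ to force $x_i^{(k+1)}=x_i^{(k)}$ in that case, so that $P$ is still non-increasing.

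The genuine gap is the second half of step (2), which you yourself flag as the crux. Your argument for $\max_i\Vert x_i^{(k+1)}-x_i^{(k)}\Vert\to 0$ only delivers $\liminf_k \max_i\Vert x_i^{(k+1)}-x_i^{(k)}\Vert=0$. Writing $M_k:=\max_i\Vert x_i^{(k+1)}-x_i^{(k)}\Vert$, the rule \reff{update:tau} gives $\tau^{(k+1)}=\tau^{(k)}$ only at those indices $k$ where $M_k\ge\tau^{(k)}$; if $\limsup M_k=c>0$ but $M_k$ is small at other indices, then $\tau^{(k)}$ is free to decrease at those other indices, and there is no contradiction with $\tau^{(k)}\to 0$. (The contradiction you describe would require $M_k\ge c/2$ for \emph{all} large $k$, i.e., a positive $\liminf$, not a positive $\limsup$.) Indeed, nothing you have derived — $\tau^{(k)}M_k^2\to 0$, $\tau^{(k)}\downarrow 0$, and the update rule — excludes a sequence with $M_k$ alternating between $\varepsilon$ and $0$ while $\tau^{(k)}$ halves on the small steps. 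This gap is not cosmetic: step (3) needs $x_j^{(k_\ell+1)}\to x_j^*$ for $j<i$ along the convergent subsequence in order for player $i$'s frozen data $(x_1^{(k_\ell+1)},\dots,x_{i-1}^{(k_\ell+1)},x_{i+1}^{(k_\ell)},\dots)$ to converge to $x_{-i}^*$, so without control of the step sizes along that particular subsequence the application of Lemma~\ref{proconv} breaks down for every player beyond the first. To close the argument you must either prove that the steps vanish along the given convergent subsequence (not merely along some subsequence), or restructure the limit argument — for instance by a contradiction argument at the level of $P$, comparing $f_i(\allx^{(k,i-1)})$ with the value at a feasible point supplied by inner semicontinuity — so that only function-value convergence, rather than convergence of the intermediate iterates, is required.
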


The updating scheme \reff{update:tau} for $\tau^{(k)}$
is a bit complicated. However, if each player's optimization problem \reff{subprob}
is convex, then the parameter $\tau^{(k)}$ can be chosen to be constant.

\begin{theorem}(\cite[Theorem~4.3]{Facchinei2011}) \label{gpgconv2}
Consider the GNEP of \reff{GNEP} such that all the functions are continuous.
Assume that \reff{GNEP} is a GPG and each set-valued map
$G_i: x_{-i} \mapsto X_i(x_{-i})$ is inner semicontinuous
relative to its domain. Suppose the objectives $f_i(\,\cdot\,,x_{-i})$
and the feasible sets $X_i(x_{-i})$ are all convex.
In Algorithm \ref{gs}, suppose each $x_i^{(k+1)}$ is a minimizer of \reff{subopt}
and the parameter $\tau^{(k)}=\tau>0$ is a constant.
Then every limit point of the sequence $\{\allx^{(k)}\}_{k=0}^{\infty}$
produced by Algorithm \ref{gs} is a GNE for \reff{GNEP}.
\end{theorem}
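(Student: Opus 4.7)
The plan is to combine three ingredients: (a) the potential function $P$ and forcing function $\sigma$ from the GPG structure to force successive iterates to be arbitrarily close, (b) inner semicontinuity of each $G_i$ to pass to the limit in the subproblem optimality inequalities, and (c) convexity of $f_i(\,\cdot\,,x_{-i})$ and $X_i(x_{-i})$ to erase the residual proximal term that appears in the limit when $\tau$ is held constant.

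I would first establish monotone descent of $P$ along the iteration. For each $k$ and $i=1,\ldots,N$, denote $\allx^{(k,i)} := (x_1^{(k+1)},\ldots,x_i^{(k+1)},x_{i+1}^{(k)},\ldots,x_N^{(k)})$, so that $\allx^{(k,0)}=\allx^{(k)}$ and $\allx^{(k,N)}=\allx^{(k+1)}$. By Definition~\ref{defgpg}(i), an induction on $i$ shows every $\allx^{(k,i)}\in X$, so $x_i^{(k)}$ is itself a feasible competitor in the $i$-th subproblem at loop $k$. Using it in \reff{subopt} yields
\[
f_i(x_i^{(k)},x_{-i}^{(k,i-1)}) - f_i(x_i^{(k+1)},x_{-i}^{(k,i-1)}) \;\geq\; \tau\,\|x_i^{(k+1)}-x_i^{(k)}\|^2 \;\geq\; 0.
\]
When this difference is positive, Definition~\ref{defgpg}(ii) gives $P(\allx^{(k,i-1)})-P(\allx^{(k,i)})\geq \sigma\bigl(f_i(x_i^{(k)},x_{-i}^{(k,i-1)}) - f_i(x_i^{(k+1)},x_{-i}^{(k,i-1)})\bigr)$; when it vanishes, the displayed bound forces $x_i^{(k+1)}=x_i^{(k)}$ and $P$ is unchanged. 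Thus $P(\allx^{(k)})$ is nonincreasing. If $\allx^{(k_\ell)}\to\allx^*$ along a subsequence, continuity of $P$ yields $P(\allx^{(k_\ell)})\to P(\allx^*)$, and monotonicity upgrades this to convergence of the full sequence $P(\allx^{(k)})\to P(\allx^*)$. Consequently each per-step decrease of $P$ tends to zero; the forcing property of $\sigma$ gives $f_i(x_i^{(k)},x_{-i}^{(k,i-1)}) - f_i(x_i^{(k+1)},x_{-i}^{(k,i-1)})\to 0$, and the displayed inequality then forces $\|x_i^{(k+1)}-x_i^{(k)}\|\to 0$ for every $i$.

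To conclude that $\allx^*$ is a GNE, fix a player $i$ and any $y_i\in X_i(x_{-i}^*)$. Because successive iterates become arbitrarily close, $\allx^{(k_\ell,i-1)}\to\allx^*$ and $\allx^{(k_\ell,i)}\to\allx^*$ along the chosen subsequence. Inner semicontinuity of $G_i$ at $x_{-i}^*$ produces $y_i^{(\ell)}\in X_i(x_{-i}^{(k_\ell,i-1)})$ with $y_i^{(\ell)}\to y_i$, and the optimality of $x_i^{(k_\ell+1)}$ in \reff{subopt} gives
\[
f_i(x_i^{(k_\ell+1)},x_{-i}^{(k_\ell,i-1)}) + \tau\|x_i^{(k_\ell+1)}-x_i^{(k_\ell)}\|^2 \;\leq\; f_i(y_i^{(\ell)},x_{-i}^{(k_\ell,i-1)}) + \tau\|y_i^{(\ell)}-x_i^{(k_\ell)}\|^2.
\]
Passing to the limit using continuity, the proximal term on the left vanishes and we obtain $f_i(x_i^*,x_{-i}^*)\leq f_i(y_i,x_{-i}^*)+\tau\|y_i-x_i^*\|^2$.

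The main obstacle is the residual $\tau\|y_i-x_i^*\|^2$, which does not disappear because $\tau>0$ is constant, so Lemma~\ref{proconv} (which needs $\tau^{(k)}\to 0$) cannot be invoked. Convexity is the tool that removes it. For each $\lambda\in(0,1)$, convexity of $X_i(x_{-i}^*)$ gives $z_i^\lambda := \lambda y_i + (1-\lambda)x_i^*\in X_i(x_{-i}^*)$; the bound above applied with $z_i^\lambda$ in place of $y_i$ yields $f_i(x_i^*,x_{-i}^*)\leq f_i(z_i^\lambda,x_{-i}^*)+\tau\lambda^2\|y_i-x_i^*\|^2$. Convexity of $f_i(\,\cdot\,,x_{-i}^*)$ then bounds $f_i(z_i^\lambda,x_{-i}^*)\leq \lambda f_i(y_i,x_{-i}^*)+(1-\lambda)f_i(x_i^*,x_{-i}^*)$; substituting and dividing through by $\lambda$ gives $f_i(x_i^*,x_{-i}^*)\leq f_i(y_i,x_{-i}^*)+\tau\lambda\|y_i-x_i^*\|^2$. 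Sending $\lambda\to 0^+$ yields $f_i(x_i^*,x_{-i}^*)\leq f_i(y_i,x_{-i}^*)$. Since $y_i\in X_i(x_{-i}^*)$ and $i$ were arbitrary, $\allx^*$ is a GNE of \reff{GNEP}.
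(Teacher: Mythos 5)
Your proof is correct. The paper itself does not prove this statement---it is quoted verbatim as \cite[Theorem~4.3]{Facchinei2011}---so there is no in-paper argument to compare against; your reconstruction follows the standard route of that reference: monotone descent of the potential $P$ along the intermediate iterates, the constant $\tau>0$ together with the forcing function to conclude $\Vert x_i^{(k+1)}-x_i^{(k)}\Vert\to 0$, inner semicontinuity to pass to the limit in the subproblem optimality inequality, and then the observation that a fixed point of the proximal best-response map solves the original convex problem (your line-segment argument dividing by $\lambda$ is exactly the usual way to prove that last fact). One small step you use silently: to form $z_i^\lambda=\lambda y_i+(1-\lambda)x_i^*$ inside $X_i(x_{-i}^*)$ you need $x_i^*\in X_i(x_{-i}^*)$, i.e.\ feasibility of the limit point; this follows from the closedness of $X$ and of each $D_i$ required in item (i) of Definition~\ref{defgpg}, together with the induction showing every $\allx^{(k)}\in X$, and is worth stating explicitly since it is also needed for $\allx^*$ to qualify as a GNE at all.
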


%Besides GPGs,
Beyond GPGs, the Gauss-Seidel method has convergence for GNEPs
with discrete strategy sets\cite{Sagratella2016computing} or mixed-integer variables\cite{Sagratella2017computing}.
In general, when \reff{GNEP} is not a GPG, the convergence of Algorithm~\ref{gs}
is not known very much. We have seen examples in Section~\ref{sc:GSAlg}
such that Algorithm~\ref{gs} fails to converge.
On the other hand, the performance of Algorithm~\ref{gs}
is actually very good in our computational experiments (see Section~\ref{nr}).
In the following, we discuss how to certify that a GNEP is a GPG.

\subsection{A certificate for GPGs}
\label{gpgvrf}

Generally, it is hard to check whether a GNEP is a GPG or not.
The main challenge is to verify the item (ii) in Definition~\ref{defgpg}.
In this subsection, we give a certificate for \reff{GPGcon2} to hold.
For the $i$th player, denote the set
\be \label{set:Ki}
K_{i} = \left\{(\xpi,\ypi,\xmi) \in \re^{n_i} \times \re^{n_i} \times \re^{n-n_i}
\left| \baray{c}
\xpi,\ypi\in\lXpi \\ \fpi(\ypi,\xmi)-\fpi(\xpi,\xmi) \ge 0
\earay \right.
\right\}.
\ee
For convenience, denote the differences of functions
\be
\left\{ \baray{rcl}
\triangle P_i &:=& P(\ypi,\xmi)-P(\xpi,\xmi), \\
\triangle f_i &:=& f_{i}(\ypi,\xmi)-f_{i}(\xpi,\xmi).
\earay \right.
\ee
The following lemma is straightforward for verification.

\begin{lemma} \label{lm:cert:GPG}
For the GNEP of \reff{GNEP}, if the item~(i) in Definition~\ref{defgpg}
holds, and there exist polynomials $P \in \re[\allx]$,
$p_{i,0},p_{i,1} \in \re[\xpi,\ypi,\xmi]$ ($i=1,\ldots,N$)
such that $p_{i,0} \ge 0, \, p_{i,1} \ge 0$ on $K_{i}$ and
\be
\label{nonnegarep}
\triangle P_i \,=\, (p_{i,0} + 1 ) \triangle f_i + p_{i,1}
\ee
for all $i$, then \reff{GNEP} is a GPG.
\end{lemma}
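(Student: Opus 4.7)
The plan is to verify items (i) and (ii) of Definition~\ref{defgpg} directly. Item (i) is part of the hypothesis, so no work is required there. For item (ii), I will take the polynomial $P$ supplied by the hypothesis as the potential function and propose the identity map $\sigma(t) = t$ as the forcing function. Since $P$ is a polynomial, it is automatically continuous on $\re^n$, and $\sigma(t)=t$ is trivially a forcing function in the sense of Definition~\ref{defgpg} because $\sigma(t_k)\to 0$ is equivalent to $t_k \to 0$.

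Next I would fix any player $i$ and any pair $\xpi, \ypi \in \lXpi$, and suppose $\triangle f_i = \fpi(\ypi,\xmi) - \fpi(\xpi,\xmi) > 0$. The key step is to observe that, under this hypothesis, the triple $(\xpi, \ypi, \xmi)$ lies in the set $K_i$ defined in \reff{set:Ki}: its first two components belong to $\lXpi$ by assumption, and the final inequality $\fpi(\ypi,\xmi)-\fpi(\xpi,\xmi)\ge 0$ holds since $\triangle f_i > 0$. Consequently the hypothesized nonnegativity of $p_{i,0}$ and $p_{i,1}$ on $K_i$ gives $p_{i,0}(\xpi,\ypi,\xmi)\ge 0$ and $p_{i,1}(\xpi,\ypi,\xmi)\ge 0$ at this particular point.

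Substituting into the identity \reff{nonnegarep}, I then obtain
\[
\triangle P_i \;=\; \bigl(p_{i,0}(\xpi,\ypi,\xmi) + 1\bigr)\,\triangle f_i \;+\; p_{i,1}(\xpi,\ypi,\xmi) \;\ge\; \triangle f_i \;=\; \sigma(\triangle f_i),
\]
which is precisely the implication required in \reff{GPGcon2}. Since $i$ was arbitrary, item (ii) holds with this choice of $P$ and $\sigma$, completing the verification that the GNEP is a GPG.

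There is essentially no obstacle in this argument: the whole proof reduces to the observation that the positivity hypothesis $\triangle f_i > 0$ automatically places the argument triple inside $K_i$, which activates the sign conditions on $p_{i,0}, p_{i,1}$ and turns the algebraic identity \reff{nonnegarep} into the desired inequality. The only mild care required is to confirm that $\sigma(t)=t$ qualifies as a forcing function and that $P$ inherits continuity from being a polynomial, both of which are immediate.
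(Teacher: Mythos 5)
Your proof is correct and is exactly the verification the paper has in mind (the paper omits it, calling the lemma ``straightforward for verification''): taking $\sigma(t)=t$, noting that $\triangle f_i>0$ places $(\xpi,\ypi,\xmi)$ in $K_i$ so that $p_{i,0},p_{i,1}\ge 0$ there, and reading off $\triangle P_i\ge\triangle f_i$ from \reff{nonnegarep} is precisely the intended argument.
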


In the equation (\ref{nonnegarep}), we can replace the constant $1$
by any positive number $\epsilon>0$, up to scaling coefficients.
For numerical reasons, we prefer the constant $1$.
Lemma~\ref{lm:cert:GPG} gives a certificate for GPGs.
The following are examples of GPGs certified by \reff{nonnegarep}.

\begin{example}
\rm \label{puregpg1}
Consider the $2$-player GNEP with the sets
\[
X=\{(x_1,x_2):1\le x_{1},x_{2}\le10,\ x_{1}\ge x_{2}\},
\]
\[
X_1(x_{-1})=\{x_1:(x_1,x_2)\in X\}, \quad
X_2(x_{-2})=\{x_2:(x_1,x_2)\in X\}.
\]
The two players' optimization problems are respectively
\be
\label{nonnegarepep1}
\begin{array}{ccccc}
\min\limits_{x_{1}\in X_1} & x_{1}+x_{2} & \vline & \min\limits_{x_{2}\in X_2} &-x_{1}x_{2}.
\end{array}
\ee
Let $P(x_{1},x_{2})=(x_{1})^3-x_{1}x_{2}+x_{1}$, we have
\begin{equation}
\left\{  \begin{array}{ll}
    \triangle P_1&=(y_{1}-x_{1})[(y_{1}-x_{1})^2+1]+
    (3y_{1}x_{1}-x_{2})(y_{1}-x_{1}), \\
    \triangle P_2&=-x_{1}(y_{2}-x_{2}), \\
    \triangle f_1&=y_1-x_1, \\
    \triangle f_2&=-x_1(y_2-x_2). \\
\end{array} \right.
\end{equation}
The equation \reff{nonnegarep} is satisfied for
\[
p_{1,0}=(y_{1}-x_{1})^2, \, p_{1,1}=(3y_{1}x_{1}-x_{2})(y_{1}-x_{1}), \,
p_{2,0} = p_{2,1} = 0.
\]
It is clear that $p_{1,0},p_{2,0},p_{2,1}$ are nonnegative.
By the definition of $K_{1}$,
$\triangle f_1\ge0$, and $3y_1x_1-x_2\ge 3y_1-x_2\ge0$,
%so $p_{1,1}$ is nonnegative on $K_{1}$.
so $p_{1,1} \ge 0$ on $K_{1}$.
\end{example}

\begin{example}
\rm \label{puregpg2}
Consider the $2$-player GNEP with the sets
\[
X=\{(x_1,x_2):(x_1)^3+(x_2)^3\le 2, x_1\ge6x_2\}, \,
\]
\[
X_1(x_{-1})=\{x_1:(x_1,x_2)\in X\}, \quad
X_2(x_{-2})=\{x_2:(x_1,x_2)\in X\}.
\]
The two players' optimization problems are respectively
\be
\label{nonnegarepep2}
\begin{array}{lllll}
\min\limits_{x_{1}\in X_1} & (x_{1})^2x_{2}+(x_{2})^2x_{1}-4(x_1)^4 & \vline
& \min\limits_{x_{2}\in X_2} & x_{1}x_{2}-3(x_2)^2 \\
\st & x_{1} \ge 0, & \vline & \st & x_{2} \ge 0.125 .\\
\end{array}
\ee
For $P(x_{1},x_{2})=(x_{1})^2x_{2}+(x_{2})^2x_{1}-4(x_1)^4$,
\begin{equation}
\left\{   \begin{array}{rcl}
 \triangle P_1&=&\triangle f_1=(y_{1})^2x_{2}+(y_{2})^2x_{1}-4(y_1)^4 \\
              & & \qquad -(x_{1})^2x_{2}-(x_{2})^2x_{1}+4(x_1)^4, \\
 \triangle P_2 &=& (x_1)^2(y_2-x_2)+x_1((y_2)^2-(x_2)^2), \\
 \triangle f_2 &=& x_1(y_2-x_2)-3(y_2)^2+3(x_2)^2. \\
\end{array} \right.
\end{equation}
The equation (\ref{nonnegarep}) holds with
\[
p_{1,0}=0, \, p_{1,1}=0, p_{2,0}=x_1,
p_{2,1}=(y_2-x_2)[4x_1(y_2+x_2)+3(y_2+x_2)-x_1].
\]
Clearly, $p_{1,0},\, p_{1,1}, \,p_{2,0}\ge0$. Note that
\[
\triangle f_2=(y_2-x_2)(x_1-3(y_2+x_2))  \ge  0
\]
on $K_2$. Then, either $x_1-3(y_2+x_2)>0$ hence $y_2-x_2\ge0$,
or $x_1-3(y_2+x_2)=0$, which forces $y_2-x_2=0$.
This is because $x_1\ge 6y_2$, $x_1\ge 6x_2$ and $y_2,x_2>0$,
if $x_1-3(y_2+x_2)=0$, then the only possible case is $x_1=6y_2=6x_2$.
Thus from $$4x_1(y_2+x_2)+3(y_2+x_2)-x_1> x_1(4y_2+4x_2-1)\ge0,$$
we know $p_{2,1}\ge0$ on $K_2$.
\end{example}

\begin{example}
\rm \label{puregpg3}
Consider the $2$-player GNEP with the sets
\[
X = \left\{(x_1, x_2)
\left| \baray{l}
x_1=(x_{1,1}, x_{1,2}) \in \nfR^2, x_2\in \nfR, \\
x_{1,1}, x_{1,2}, x_2\ge 0.5, \\
x_2-0.3\le x_{1,1}+x_{1,2}\le x_2+0.3
\earay \right.
\right\},
\]
and $X_1(x_{-1})=\{x_1:(x_1,x_2)\in X\}$,
$X_2(x_{-2})=\{x_2:(x_1,x_2)\in X\}$.
The optimization problems are respectively
\be
\label{nonnegarepep3}
\begin{array}{lllll}
\min\limits_{x_{1}\in X_1} & x_{1,1}x_2+x_{1,2}x_2 & \vline &
\min\limits_{x_{2}\in X_2} & x_{1,1} \cdot x_{1,2} \cdot x_2 \\
\st & \Vert x_1\Vert =2 & \vline & &
\end{array}
\ee
For $P(x_{1}, x_{2})=(x_{1,1}+x_{1,2}+1)^3 x_2$,
\begin{equation}
\left\{   \begin{array}{ll}
    \triangle P_1&=x_2((y_{1,1}+y_{1,2}+1)^3-(x_{1,1}+x_{1,2}+1)^3), \\
    \triangle P_2&=(y_2-x_2)(x_{1,1}+x_{1,2}+1)^3, \\
    \triangle f_1&=x_2(y_{1,1}+y_{1,2}-x_{1,1}-x_{1,2}), \\
    \triangle f_2&=x_{1,1}x_{1,2}(y_2-z_2).
\end{array} \right.
\end{equation}
The equation (\ref{nonnegarep}) holds with
\begin{eqnarray*}
   p_{1,0}&=&(y_{1,1}+y_{1,2}+1)^2+(x_{1,1}+x_{1,2}+1)^2+(y_{1,1}+y_{1,2})(x_{1,1}+x_{1,2}) \\
    && \qquad +y_{1,1}+y_{1,2} +x_{1,1}+x_{1,2}, \\
    p_{1,1}&=&0, \\
    p_{2,0}&=&3x_{1,1}+3x_{1,2}+5, \\
    p_{2,1}&=&(y_2-x_2)\big[(x_{1,1})^3+(x_{1,2})^3+3(x_{1,1})^2+ \\
    && \qquad 3(x_{1,2})^2+3x_{1,1}+3x_{1,2}+1 \big].
\end{eqnarray*}
The equality $\triangle P_1=(1+p_{1,0})\triangle f_1$ follows from the identity
\[
a^3-b^3 \, = \, (a-b)(a^2+ab+b^2)
\]
with $a=y_{1,1}+y_{1,2}+1$ and $b=x_{1,1}+x_{1,2}+1$.
Clearly, $p_{1,0},p_{1,1} \geq 0$ on $K_1$, and $p_{2,0} \geq 0$ on $K_2$.
Since $x_{1,1}x_{1,2}>0$,
$\triangle f_2\ge0$ implies $y_2-x_2\ge0$,
so $p_{2,1}\ge0$ on $K_2$.
\end{example}

\subsection{Putinar Positivstellensatz for the certificate}

Lemma~\ref{lm:cert:GPG} gives a convenient certificate for checking GPGs.
One needs to find polynomials $p_{i,0},p_{i,1}$ and $P$ satisfying (\ref{nonnegarep})
and $p_{i,0},p_{i,1} \ge 0$ on $K_i$.
For a polynomial tuple $h$, we have seen that if $p \in \qmod(h)$,
then $p \ge 0$ on the semialgebraic set $\mathcal{S}(h)$.
This motivates us to use Putinar's Positivstellensatz
%for getting appropriate certificates.
for verifying that.

For the set $K_i$ as in \reff{set:Ki},
let $h_i:=(h_{i,t})_{t=1}^{m_i}$ be a tuple of polynomials in
$\re[x_i, y_i, x_{-i}]$ such that
\[
K_i = \{(x_i, y_i, x_{-i}): h_i(x_i, y_i, x_{-i}) \ge 0 \}.
\]
Moreover, let $h_{i,0}=1$ for all $i$.
When the item (i) in Definition~\ref{defgpg} holds,
the GNEP of \reff{GNEP} is a GPG if there exist
$P\in\nfR[\allx]$ and $q_{i,0}, q_{i,1}\in \qmod(h_i)$
such that
\begin{equation}
  \label{sosrep}
  \triangle P_i=(q_{i,0}+1)\triangle f_i+q_{i,1}
\end{equation}
for all players. For an even degree $2d$,
we parameterize $P, q_{i,0}, q_{i,1}$ as
\[
P(\allx) = \mathbf{p}^T [\allx]_{2d}, \quad
q_{i,0} = {\sum}_{t=0}^{m_i} \big( [\allx,y_i]_{d-d_{it} } \big) ^T
\cdot Q_{i,0}^t \cdot \big( [\allx,y_i]_{d-d_{it} } \big)\cdot h_{i,t},
\]
\[
q_{i,1} = {\sum}_{t=0}^{m_i} \big( [\allx,y_i]_{d-d_{it} } \big) ^T
\cdot Q_{i,1}^t \cdot \big( [\allx,y_i]_{d-d_{it} } \big)\cdot h_{i,t}.
\]
In the above, the degree
$d_{it} = \lceil \deg(h_{i,t})/2 \rceil.$
One can show that $q_{i,0}, q_{i,1}\in \qmod(h_i)$ if and only if
there exist psd matrices $Q_{i,0}^t, Q_{i,1}^t$
in the above parametrization, for some $d$
\cite[Chapter 2]{lasserre2015introduction}.
For notational convenience,  denote
\be \label{def:qQ}
% q \, := \, \big( q_{i,0},  q_{i,1} \big)_{ i =1, \ldots, N}, \quad
Q \, := \, \big( Q_{i,0}^t, Q_{i,1}^t \big)_{ i =1, \ldots, N, t = 1, \ldots, m_i }.
\ee
Therefore, the certificate \reff{nonnegarep} in Lemma~\ref{lm:cert:GPG}
can be checked by solving the semidefinite program
\be  \label{sosrlx}
\left \{ \begin{array}{rl}
\min\limits_{\mathbf{p},Q}  & \sum\limits_{i,t} \mbox{trace} \Big( Q_{i,0}^t + Q_{i,1}^t \Big)  \\
\st &\triangle P_i\equiv(q_{i,0}+1)\triangle f_i+q_{i,1} \,(\forall \, i), \\
  & P \in\nfR[\allx]_{2d}, \\
  &  Q_{i,0}^t \succeq 0, \, Q_{i,1}^t \succeq 0  \, (\forall \, i, t).
\end{array} \right.
\ee

The certificate given by solving (\ref{sosrlx}) does not require to
have priori polynomials $P,q_{i,0}$ and $q_{i,1}$.
Instead, the coefficients of these polynomials are variables in (\ref{sosrlx})
that are awaiting to be solved numerically.

\begin{example}\rm \label{ep:5.4}
Consider the $2$-player GNEP such that the two players' optimization problems are
\begin{equation}
  \begin{array}{cllcl}
    \min\limits_{x_1 \in \re^1} & 2x_2-x_1& \vline &
    \min\limits_{x_2 \in \re^1 }& (x_1)^2-2x_1x_2-(x_2)^2 \\
    \st &(x_1)^2+(x_2)^2\le 1, &\vline& \st &(x_1)^2+(x_2)^2 \le 1, \\
    & x_1 \ge 0, &\vline& & x_2\ge 0.
    \end{array}
\end{equation}
The $\Delta f_1=x_1-y_1$, $\Delta f_2=2x_1(x_2-y_2)+(x_2)^2-(y_2)^2$,
and the defining polynomial tuples $h_1,h_2$
for the sets $K_1, K_2$ are respectively,
\[
\begin{array}{l}
h_1=\{1-(x_1)^2-(x_2)^2,\,1-(y_1)^2-(x_2)^2,\, x_1,\, y_1,\,  \Delta f_1\},  \\
h_2=\{1-(x_1)^2-(x_2)^2,\,1-(x_1)^2-(y_2)^2,\, x_2,\, y_2,\,  \Delta f_2\}.
\end{array}
\]
For the degree $d=2$, the semidefinite program \reff{sosrlx} becomes
\be
\label{eq:explicitcertify}
\left \{ \begin{array}{rl}
\min\limits_{\mathbf{p},Q}  & \sum\limits_{i=1}^2\sum\limits_{t=0}^6 \mbox{trace}
\Big( Q_{i,0}^t + Q_{i,1}^t \Big)  \\
\st &\mathbf{p}^T([y_1,x_2]_4-[\allx]_4) =
\left(\sum\limits_{t=0}^6[\allx,y_1]^T_{2-d_{it}}Q_{1,0}^t[\allx,y_1]_{2-d_{it}} h_{1,t}
+1 \right)\cdot \Delta f_1  \\
&\qquad\qquad\qquad\qquad\qquad\qquad +\sum\limits_{t=0}^6[\allx,y_1]^T_{2-d_{it}}Q_{1,1}^t[\allx,y_1]_{2-d_{it}} h_{1,t}, \\
& \mathbf{p}^T([x_1,y_2]_4-[\allx]_4) =
\left(\sum\limits_{t=0}^6[\allx,y_2]^T_{2-d_{it}}Q_{2,0}^t[\allx,y_2]_{2-d_{it}} h_{2,t}
+1 \right)\cdot\Delta f_2 \\
&\qquad\qquad\qquad\qquad\qquad\qquad +\sum\limits_{t=0}^6[\allx,y_2]^T_{2-d_{it}}Q_{2,1}^t[\allx,y_2]_{2-d_{it}} h_{2,t}, \\
  &  \mathbf{p}\in\mathbb{R}^{\mathbb{N}^2_4}\cong\mathbb{R}^{15},\, Q_{i,0}^t \succeq 0, \, Q_{i,1}^t \succeq 0  \, (\forall \, i, t).
\end{array} \right.
\ee
By solving (\ref{eq:explicitcertify}),
we can numerically verify that this GNEP is a GPG.
The computed solution of (\ref{eq:explicitcertify}) is displayed as follows
(the coefficients are displayed with $6$ decimal digits)
\[
\begin{array}{l}
P(\allx)=
% 3.1762x_1-1.0000(x_2)^2-6\cdot10^{-6}(x_1)^2-2.0000x_1x_2+0.2198(x_1)^3-2\cdot10^{-6}(x_1)^4
-3.176290x_1-1.000000(x_2)^2-0.000006(x_1)^2\\
\qquad\qquad\qquad\qquad\qquad\quad-2.000000x_1x_2+0.219805(x_1)^3-0.000002(x_1)^4, \\
q_{10}(\allx,y_1)=2.176289+0.000006x_1+2.000000x_2+0.000006y_1\\
\qquad\qquad\qquad-0.219805(y_1)^2+0.000002(y_1)^3-0.219805(x_1)^2+0.000002(x_1)^3\\
\qquad\qquad\qquad\qquad\qquad\qquad-0.219805x_1y_1+0.000002x_1(y_1)^2+0.000002(x_1)^2y_1, \\
q_{11}(\allx,y_1)=q_{20}(\allx,y_2)=q_{21}(\allx,y_2)=0.
\end{array}
\]
\end{example}

The semidefinite program \reff{sosrlx} is useful for checking GPGs.
For instance, GNEPPs in Example~\ref{ep55} and \ref{ep57} are numerically checked to be GPGs by solving (\ref{sosrlx}).
Moreover, the problems A11-13,A15,A17-A18 in \cite{FacKan10} (see section 5.1)
can be verified to be GPGs in the same way.

\section{Numerical experiments}
\label{nr}

This section reports numerical experiments for solving GNEPPs
by using Algorithm~\ref{gs}. The subproblem \reff{subopt}
is a polynomial optimization problem.
We apply the software {\tt GloptiPoly~3} \cite{GloPol3}
and {\tt SeDuMi} \cite{sturm1999using} to solve
Moment-SOS relaxations of \reff{subopt}.
The semidefinite program (\ref{sosrlx}) for certifying GPGs
is implemented by the software {\tt YALMIP} \cite{yalmip}.
The computation is implemented in a Dell XPS 15 9550 Laptop,
with an Intel\textsuperscript \textregistered \
Core(TM) i7-6700HQ CPU at 2.60GHz$\times $4 and 16GB of RAM,
in a Windows 10 operating system. 
In the computation, the sequence $\{\allx^{(k)}\}_{k=0}^{\infty}$
is regarded to converge if for some $k$ it holds that
\be
  \label{iterdiff}
\Vert \allx^{(i)}-\allx^{(j)}\Vert_{\infty} \, \le \, 10^{-8}\mbox{ for all $i,j \in \{k-10,\ldots,k\}$}.
\ee 
The point $\allx^{(k)}$ is regarded as a GNE
with the accuracy parameter $\varepsilon > 0$ if
\be
\label{def:acc_para}
|f_i(\allx^{(k)})-{f_i}^*|\le\varepsilon
\ee
for all players, where ${f_i}^*$ is the minimum value of (\ref{subprob})
with $x_{-i} = x_{-i}^{(k)}$. 
Our computational results show that Algorithm~\ref{gs}
performs very well for solving GNEPPs, even if for nonconvex ones.
First, we see some examples of the GPGs from Section~\ref{sc:GPG}.

\begin{example}\rm
Consider the environmental pollution problem
%Example~\ref{pollute} and Example~\ref{gpgpollute}.
in the introduction and Example~\ref{gpgpollute}.
We have seen that it is a GPG.
Assume the number of players is $N=2$ and the parameters
$b_1=b_2=2, E_1=E_2=1, \gamma_{1,1}=0.7,\gamma_{1,2}=0.9,\gamma_{2,1}=\gamma_{2,2}=0.8.$
We run Algorithm~\ref{gs}
with $x_{1,0}^{(0)} = x_{1,1}^{(0)} = \cdots = x_{2,2}^{(0)} =0.5$,
and $\tau^{(0)}=0.1$,
$\tau^{(k+1)}$ updated as in (\ref{update:tau}).
After $21$ iterations, we get
\[x_{1,0}^{(21)} =0.9999,\
   x_{1,1}^{(21)}=0,\
   x_{1,2}^{(21)}=0,\
   x_{2,0}^{(21)} =0.7500,\
   x_{2,1}^{(21)}=0,\
   x_{2,2}^{(21)}=0.9375.
\]
Its accuracy parameter $\varepsilon =1.7856\cdot 10^{-8}$.
It costs about $7$ seconds.
\end{example}

\begin{example}  \rm
i) \label{ep51}
Consider the GNEP in Example \ref{puregpg1}. It is a GPG.
All the individual optimization problems are convex.
We run Algorithm~\ref{gs} with the initial point
$(x_1^{(0)}, x_2^{(0)}) = (3, 2)$ and fixed $\tau^{(k)} = 0.02$, and get a GNE $(2.0000, 2.0000)$ with $\varepsilon=6.1541\cdot10^{-8}$.
It runs $12$ iterations and costs  $2.6289$ seconds. \\
ii) \label{ep52}
Consider the GNEP in Example \ref{puregpg2}. It is a GPG.
We run Algorithm~\ref{gs} with the initial point
$(x_1^{(0)}, x_2^{(0)}) = (1, 0.125)$ and fixed $\tau^{(k)} = 0.02$.
It returns the GNE $(1.2595, 0.1250)$ with $\varepsilon=2.2891\cdot10^{-9}$.
It runs $12$ iterations and costs around $2$ seconds.  \\
iii) \label{ep53}
Consider the GNEP in Example \ref{puregpg3}. It is a GPG.
All the individual optimization problems are convex.
We run Algorithm~\ref{gs} with the initial point
$(x_{1,1}^{(0)}, x_{1,2}^{(0)}, x_2^{(0)}) = (1, 1, 2)$ and fixed $\tau^{(k)} = 0.02$.
It returns the GNE $(1.3229, 0.5000, 1.5229)$ with $\varepsilon=1.3631\cdot 10^{-7}$.
It runs $12$ costs around $3$ seconds.\\
iv) \label{ep54}
Consider the GNEP in Example~\ref{ep:5.4}.
It is numerically verified to be a GPG.
We run Algorithm~\ref{gs} with the initial point
$(x_1^{(0)}, x_2^{(0)}) = (0.2, 0.3)$ and fixed $\tau^{(k)} = 0.02$.
For $k=12$, we get $\allx^{(12)} = (0.9539, 0.3)$.
The iteration difference is $2.1792\cdot10^{-8}$
and the GNE accuracy $\varepsilon=5.4170\cdot 10^{-9}$.
It costs about $1.6$ seconds.
\end{example}

\begin{example} \rm
\label{ep55}
Consider the $2$-player GNEP such that
the individual optimization problems are respectively
\begin{equation*}
  \begin{array}{cllcl}
    \min\limits_{x_{1} \in \re^2}&x_{1,1}(x_{1,2}+2x_{2,1}+2x_{2,2})&\vline&\min\limits_{x_{2} \in \re^2}& (x_{1,1})^2+(x_{1,2})^2\\
    & \quad+x_{1,2}(x_{2,1}+x_{2,2})
    +2x_{2,1}x_{2,2}&\vline&&\qquad\quad-(x_{2,1})^2-(x_{2,2})^2\\
    \st &\sum\limits_{i=1}^2\sum\limits_{j=1}^2x_{i,j}=1, &\vline& \st &\sum\limits_{i=1}^2\sum\limits_{j=1}^2x_{i,j} = 1,\\
    &x_{1,1}\ge0,\,x_{1,2} \ge 0,&\vline&&x_{2,1}\ge0,\,x_{2,2} \ge 0.
  \end{array}
\end{equation*}
By solving the semidefinite program (\ref{sosrlx}),
we can numerically check that this GNEP is a GPG.
Run Algorithm~\ref{gs} with the initial points
$x_1^{(0)} = (0.2, 0.3)$, $x_2^{(0)} = (0.2, 0.3)$
and fixed $\tau^{(k)} =0.02$. After $19$ loops, we get that
\[
\allx^{(19)} = (0,  0.5, 0, 0.5).
\]
as a GNE with accuracy parameter $\varepsilon=5.1857\cdot 10^{-7}$.
It costs about $5.36$ seconds.
\end{example}

\begin{example} \rm  \label{ep57}
Consider the $2$-player GNEP whose optimization problems are
\[
\begin{array}{cllcl}
    \min\limits_{x_{1} \in \re^2 }& -2(x_{1,2})^2 + x_{2,1}x_{1,2} + x_{1,1}x_{2,1} &\vline&
    \min\limits_{x_{2} \in \re^2 }& (x_{2,1})^2 - 2x_{1,2}x_{2,2} \\
    &&\vline&
    &\qquad\qquad - 2x_{1,1}x_{2,2} + (x_{2,2})^2\\
    \st & x_{1,1}+x_{1,2}+x_{2,1}+x_{2,2}=1&\vline& \st &x_{1,1}+x_{1,2}+x_{2,1}+x_{2,2}=1\\
    &x_{1,1},x_{1,2}\ge0.1&\vline&&x_{2,1},x_{2,2}\ge0.1
\end{array}
\]
By solving the semidefinite program (\ref{sosrlx}),
one can numerically check that this GNEP is a GPG.
We run Algorithm~\ref{gs} with
\[ \allx^{(0)} = (0.25, 0.25, 0.25, 0.25), \quad \tau^{(0)} =0.1, \]
and $\tau^{(k+1)}$ updated as (\ref{update:tau}).
For $k=12$, we get
\[
\allx^{(12)} = (0.1000, 0.4000, 0.1000, 0.4000),
\]
which is a GNE. The accuracy $\varepsilon=1.14611\cdot 10^{-8}$.
It costs around $2.7$ seconds.
\end{example}

\begin{example} \rm  \label{ep58}
Consider the GNEP whose optimization problems are
\[
 \baray{cllcl}
%\label{gnepgpg1}
 \min\limits_{x_{1} \in \re^2} & (x_{1,1})^2+(x_{1,2})^2+x_{1,1}+x_{1,2}&\vline& \min\limits_{x_{2} \in \re^2} & (x_{2,2})^2-x_{2,1}x_{2,2}\\
\st & \Vert x_1\Vert^2+\Vert x_2\Vert^2\le 1,&\vline&
        \st &\Vert x_1\Vert^2+\Vert x_2\Vert^2\le 1, \\
    & x_{1,1}\ge0,\, x_{1,2}\le 0.5,&\vline& &x_{2,1}\le0,\, 0.3\le x_{2,2}\le 0.8.
\earay
\]
This is a GPG \cite{Facchinei2011}. We run Algorithm~\ref{gs} with
\[
x_1^{(0)} = (0.5,0.5), \quad x_2^{(0)} =(-0.6,0.6), \quad \tau^{(0)} =0.1,
\]
and $\tau^{(k+1)}$ updated as (\ref{update:tau}).
For $k=16$, we get $\allx^{(16)} = (0, -0.5,  0, 0.3) $ as a
GNE with accuracy parameter $\varepsilon =4.1908\cdot 10^{-10}$.
It costs around $4.11$ seconds.
\end{example}

\begin{example} \rm
  \label{needtau}
Consider the $3$-player GNEP whose optimization problems are
\begin{equation*}
% \label{jump}
\begin{array}{cllcllcl}
  \min\limits_{x_1 \in \re^1}& (x_1-x_{2})^2  &\vline&
  \min\limits_{x_2 \in \re^1}& (x_{2}-x_{3})^2 &\vline&
  \min\limits_{x_3 \in \re^1}& (x_{3}-x_1)^2  \\
\st & %(x_1)^2+(x_{2})^2+(x_{3})^2\le 10
    \sum_{i=1}^3 (x_i)^2 \le 10
    &\vline& \st &x_{2}\le3&\vline&
\st & % x_1+x_{2}+x_{3}\le 6 .
    \sum_{i=1}^3 x_i \le 6.
\end{array}
\end{equation*}
Any feasible point $\allx$ with $x_{1}=x_{2}=x_{3}$ is a GNE,
with optimal value $0$ for all players.
If we run Algorithm~\ref{gs} with
$(x_1^{(0)}, x_2^{(0)}, x_3^{(0)}) = (0,1,2)$
and $\tau^{(k)} = 0$ (which is actually not allowed since we require $\tau>0$,
but we still show the result of $\tau=0$
in order to show the necessity of a positive $\tau$),
then we get an alternating sequence
\[
(0,1,2)\longrightarrow(1,1,2)\longrightarrow(1,2,2)\longrightarrow(1,2,1)\longrightarrow(2,2,1)
\]
\[
\longrightarrow(2,1,1)\longrightarrow(2,1,2)\longrightarrow(1,1,2)\longrightarrow \cdots.
\]
If we run Algorithm~\ref{gs} with the same initial point $\allx^{(0)} = (0,1,2)$
but different regularization parameter $\tau^{(k)}$,
the computational results are reported in Table~\ref{needtautable}.
We run it for five different $\tau^{(k)}$.
Two of them are fixed values $0.1,0.05$,
and the other one is $\tau_0=0.5$, $\tau^{(k+1)}$ updated as (\ref{update:tau}).
In the table, ``Iteration Difference" is the value of
\[
\max\limits_{291\le i<j\le300}\Vert \allx^{(i)}-\allx^{(j)}\Vert_{\infty}
\] since none of these five
sequence satisfies (\ref{iterdiff}) at the $300$ iteration.
And ``error" means the number that $\allx^{(300)}$ can be verified up to as a GNE.

\begin{table}[!htbp]
\centering
\caption{Computational Results for Example~\ref{needtau}}
\label{needtautable}%
    \begin{tabular}{|r|r|r|r|c|c|}  \hline
    $\tau^{(k+1)}$      & $x_1$ & $x_2$ & $x_3$ & {Iteration Difference} & $\varepsilon$  \\ \hline
    0.1000   & 1.4289   & 1.4289  &  1.4289 & $1.9139\cdot 10^{-5} $& $10^{-7}$\\
    0.0500  & 1.4494  &  1.4494  &  1.4494 & $5.2015\cdot 10^{-5}$ & $10^{-7}$\\
    % 0.0010 & 1.6921  &  1.2872  &  1.6917 & 0.4185 & 0.1640 \\
    (\ref{update:tau})   & 1.4116   & 1.4106  &  1.4116 & 0.0020 & $10^{-6}$ \\  \hline
    % $0.9\tau^{(k)}$  & 1.4509  &  1.3914  &  1.4509 & 0.0595 & 0.0035 \\
    \end{tabular}%
\end{table}

\end{example}

\begin{example} \rm \label{ep:nonconvex}
Consider the following $2$-player GNEP
%that each player controls $2$ decision variables
\[  %\label{nonconvex}
\begin{array}{cllcl}
    \min\limits_{x_{1}\in\re^2}&(x_{1,1})^3+x_{1,2}x_{2,1}
    +x_{1,1}x_{1,2}+x_{2,2}&\vline&
    \min\limits_{x_{2}\in\re^2}&
    -({x_{2,1}})^4+x_{1,1}({x_{2,2}})^2\\
    \st &({x_{1,1}})^2+({x_{1,2}})^2\le1&\vline&\st&x_{1,1}\le({x_{2,1}})^2+({x_{2,2} })^2\le1.
\end{array}
\]
It can be observed that both objective functions are nonconvex.
Further the feasible set of the second player is not convex neither.
We run Algorithm~\ref{gs} with
$\allx^{(0)} = (0.5,0.5,0.6,0.6)$ and fixed $\tau^{(k)} =0.02$.
For $k=16$, we get
\[
\allx^{(16)} = (-0.9342,\, -0.3568, \, 1.0000, \, -3.7839\cdot 10^{-5}),
\]
which is a GNE. The accuracy $\varepsilon=6.1075\cdot 10^{-8}$.
It costs around $3.68$ seconds.
\end{example}

\begin{example}\rm
  (\cite{facchinei2009generalized,Kesselman2005})
  \label{internet}
Consider the example of a model for Internet switching \cite{facchinei2009generalized,Kesselman2005}.
Assume there are $N$ users, and the maximum capacity of the buffer is $B$.
The $x^i$ denotes the amount of $i$th user's ``packets" in the buffer.
It is clear $x^i\ge0$ for any $i$.
We also suggest the buffer is managed with \textit{``drop-tail" policy},
which means if the buffer is full,
further packets will be lost and resent.
Let $\frac{x_i}{x_1 + \cdots + x_N}$ be the \textit{transmission rate} of user $i$,
and $\frac{x_1 + \cdots + x_N}{B}$ represent the \textit{congestion level} of the buffer,
and $1-\frac{x_1 + \cdots + x_N}{B}$ measure the decrease in the utility of the $i$th user as the congestion level increases.
The $i$th user's optimization problem is
\begin{equation*}
\left\{ \begin{array}{ll}
   \min\limits_{ x_i } &f_i(\allx)=-\frac{x_i}{x_1 + \cdots + x_N}(1-\frac{x_1 + \cdots + x_N}{B}) \\
  %    \left[x_{i,0}-\sum\limits_{j=1}^N\gamma_{j,i}x_{j,i}+2\prod\limits_{k=1}^N
  % (x_{k,0}-\sum\limits_{j=1}^N\gamma_{j,k}x_{j,k})\right],\\
      \st & x_i\ge 0, \, x_1+\cdots x_N\le B.
\end{array} \right.
\end{equation*}
It can be transformed into a polynomial optimization problem by
introducing a new variable $y_i$ for each player.
The GNEP is then equivalent to that
\begin{equation*}
\left\{ \begin{array}{ll}
   \min\limits_{ x_i,y_i } &-x_iy_i(1-\frac{\sum x_i}{B}) \\
  %    \left[x_{i,0}-\sum\limits_{j=1}^N\gamma_{j,i}x_{j,i}+2\prod\limits_{k=1}^N
  % (x_{k,0}-\sum\limits_{j=1}^N\gamma_{j,k}x_{j,k})\right],\\
      \st & x_i\ge0, \, x_1+\cdots x_N\le B\\
          & (x_1+\cdots+x_N)y_i=1.
\end{array} \right.
\end{equation*}
Here, we consider the case that $B=1$ and $N=10$,
and run Algorithm~\ref{gs} with the initial point
\[
(0.4,\underbrace{0.01,0.01,\ldots,0.01}_{9 \,\, \mbox{times} },
  \underbrace{1/0.49,1/0.49,\ldots,1/0.49}_{10 \,\, \mbox{times} }),
\]
and $\tau^{(0)}=0.1$.
The parameters $\tau^{(k)}$ are updated as in \reff{update:tau}.
After $47$ iterations, Algorithm~\ref{gs} returned the point (here we only show the result of $x_1,\dots,x_{10}$)
\[
(0.09, \, 0.09, \,  \ldots, \,  0.09).
\]
with accuracy parameter $\varepsilon=1.6344\cdot 10^{-8}$.
It costs around $61.93$ seconds.
\end{example}

\begin{example} \rm
(\cite[A.~1]{FacKan10})  \label{A1}
Consider a variation of the GNEP in the last example
that we change the constraints of the first player to $0.3\le x_i\le 0.5$.
This GNEP can also be transformed into a GNEPP by introducing a new variable
$y_i$ for each player and it is then equivalent to that
\[
%\label{addconsep1}
\begin{array}{cllcl}
  &\mbox{player }\, i=1 &\vline&&\mbox{player }\, i > 1  \\
  \min\limits_{x_1, y_1 \in \re}& -x_1 y_1 (1-\frac{\sum x_1}{B})
  & \vline & \min\limits_{x_i, y_i \in \re}& -x_i y_i (1-\frac{\sum x_i}{B}) \\
\st & 0.3 \le x_1 \le 0.5 & \vline & \st &  x_1 + \cdots + x_N \le B,\,x_i\ge 0.001\\
  & (x_1 + \cdots + x_N)y_1=1 & \vline & &  (x_1 + \cdots + x_N)y_i=1.
\end{array}
\]
Here, we consider the case that $B=1$ and $N=10$, the same as in \cite{FacKan10}.
We run Algorithm~\ref{gs} with the initial point
\[
(0.3,\underbrace{0.01,0.01,\ldots,0.01}_{9 \,\, \mbox{times} },
\underbrace{1/0.39,1/0.39,\ldots,1/0.39}_{10 \,\, \mbox{times} })
\]
and $\tau^{(0)}=0.1$.
The parameters $\tau^{(k)}$ are updated as in \reff{update:tau}.
After $47$ iterations, Algorithm~\ref{gs} returned the point
(here we only show the result of $x_1,\dots,x_{10}$)
\[
(0.3,  0.06943, 0.06943, \ldots,  0.06943)
\]
with accuracy parameter $\varepsilon=1.1261\cdot 10^{-8}$.
It costs around $60.69$ seconds.
\end{example}

\begin{example} \rm
\label{minusA1}
Consider the GNEP which is the same as in Example~\ref{A1}
except we change the objective function to
\[
f_i(\allx)=\frac{x_i}{x_1 + \cdots + x_N}(1-\frac{x_1 + \cdots + x_N}{B}).
\]
We still consider the case that $B=1$ and $N=10$,
and the same technique to transform each player's subproblem
into polynomial optimization problems.
Start from the initial point
\[
(0.3, \underbrace{0.01,0.01,\ldots,0.01}_{9 \,\, \mbox{times} },
\underbrace{1/0.39,1/0.39,\ldots,1/0.39}_{10 \,\, \mbox{times} })
\]
with $\tau^{(0)}=0.1$ and $\tau^{(k)}$ updated as in \reff{update:tau}.
After $44$ iterations, Algorithm~\ref{gs} returns the GNE
(here we only show the result of $x_1,\dots,x_{10}$)
\[
(0.5000, \, 0.4920, \, 0.0010, \, \ldots, \, 0.0010 )
\]
with the accuracy parameter $\eps=3.7773\cdot 10^{-7}$.
It took about $59$ seconds.
\end{example}

\begin{example} \rm     \label{rdnsimp}
(Random GNEPPs  with joint simplex/ball constraints)
We randomly generate objective polynomials for each player
with the joint simplex/ball constraint
\[
\sum_{i=1}^N\sum_{j=1}^{n_i} x_{i,j}=1, x_{i,j}  \ge  0,
\quad \mbox{ or } \quad
\Vert x_1 \Vert^2 + \cdots + \Vert x_N \Vert^2 \leq 1.
\]
We generate $100$ random instances and count the number of problems
that was solved successfully by Algorithm~\ref{gs}.
The accuracy parameter is set to be $\varepsilon = 10^{-6}$ for
checki{}ng $\allx^{(k)}$ as a GNE, i.e., we regard $(\allx^{(k)})$ 
as a GNE if (\ref{def:acc_para}) was satisfied with $\varepsilon=10^{-6}$.
For each instance, we run Algorithm~\ref{gs} for at most $200$ loops
with $\tau^{(0)}=0.1$, $\tau^{(k+1)}$ updated as in (\ref{update:tau}).
If it does not return a GNE with required accuracy,
we regard that it fails to solve the GNEP.
The performance of Algorithm~\ref{gs} is reported in Table~\ref{jsplx}.
The number $N$ is the number of players,  $n_i$ is the dimension of
the $i$th player's strategy vector,
and $d$ is the degree of objective polynomials.
The time is measured in seconds.

\begin{table}[htb]
\centering
\caption{Computational Results for Example~\ref{rdnsimp}}
\begin{tabular}{|c|l|r|c|r|c|r|} \hline
%  \toprule
\multicolumn{3}{|c|}{}  & \multicolumn{2}{c|}{Joint Simplex} &  \multicolumn{2}{c|}{Joint Ball}  \\  \hline
\multicolumn{1}{|l|}{$N$} & \multicolumn{1}{l|}{$(n_1,\ldots, n_N)$} & \multicolumn{1}{l|}{$d$} & \multicolumn{1}{l|}{Succ. Rate} & \multicolumn{1}{l|}{Ave. Time} &
\multicolumn{1}{l|}{Succ. Rate} & \multicolumn{1}{l|}{Ave. Time}  \\ \hline
3     & (2,2,2)     & 3     &   100\%  &9.97  & 94 \% & 16.71 \\  %\hline
3     & (2,2,2)     & 4     &   92\%  &46.10  & 83 \% & 37.88  \\  %\hline
3     & (3,3,3)     & 2     &   95\%  &11.21  & 97 \% & 9.93  \\  %\hline
3     & (3,3,3)     & 3     &   92\%  &36.21  & 96 \% & 38.44 \\  %\hline
3     & (3,3,3)     & 4     &   84\%  &98.76  & 88 \% & 88.98 \\  %\hline
4     & (3,3,3,3)     & 2     &   94\%  &19.50  & 96 \% & 19.10 \\ %\hline
2     &(4,3)  & 3     &   97\%  &13.53  & 92 \% & 17.55 \\ %\hline
2     &(4,3)  & 4     &   92\%  &52.54  & 94 \% & 55.65 \\ %\hline
3     &(3,2,4)& 2     &   96\%  &9.43   & 97 \% &  9.09 \\ %\hline
3     &(3,2,4)& 3     &   92\%  &44.53  & 98 \% & 26.06 \\ %\hline
4     &(3,2,4,2)& 2     &   93\%  &19.52  & 95 \% & 22.73 \\ %\hline
4     &(3,2,4,2)& 3     &   94\%  &70.76  & 96 \% & 89.46 \\ \hline
\end{tabular}%
\label{jsplx}%
\end{table}%
\end{example}

\subsection{Test problems in \cite{FacKan10}}

We apply Algorithm~\ref{gs} to solve the GNEPs in \cite{FacKan10}
that are GNEPPs or that can be transformed into GNEPPs.
We normalize the objective functions such that the greatest absolute values of the coefficients are equal to one. For example,
the problem~A.17 in \cite{FacKan10} is normalized as follows:
\begin{equation*}
\begin{array}{cllcl}
   \min\limits_{x_1\in \re^2} & \frac{1}{38}((x_{1,1})^2+x_{1,1}x_{1,2}+(x_{1,2})^2)
    &\vline &\min\limits_{x_{2} \in \re^1 } &\frac{1}{25}(x_{1,1}+x_{1,2})x_{2,1}\\
    &\quad+(x_{1,1}+x_{1,2})x_{2,1}-\frac{25x_{1,1}}{38}-x_{1,2}&\vline&&\qquad\quad+\frac{1}{25}(x_{2,1})^2-x_{2,1}\\
  %    \left[x_{i,0}-\sum\limits_{j=1}^N\gamma_{j,i}x_{j,i}+2\prod\limits_{k=1}^N
  % (x_{k,0}-\sum\limits_{j=1}^N\gamma_{j,k}x_{j,k})\right],\\
  \st & x_{1,1}+2x_{1,2}-x_{2,1}\le14,&\vline& \st& x_{1,1}+2x_{1,2}-x_{2,1}\le14,\\
          & 3x_{1,1}+2x_{1,2}+x_{2,1}\le30,&\vline&& 3x_{1,1}+2x_{1,2}+x_{2,1}\le30,\\
          & x_1\ge0, &\vline& & x_1\ge0.\\
\end{array}
\end{equation*}
For the test problem~A.2 and A.14,
we use the same technique as shown in Example~\ref{A1}
to transform these non-polynomial GNEPs into GNEPPs.
For the test problem~A.10a, we run Algorithm~\ref{gs}
with the same initial point as in \cite{FacKan10}
and yield an alternative sequence that is not convergent.
Moreover, we also run Algorithm~\ref{gs} 
with randomly generated feasible initial points for $100$ 
times and no convergent sequence can be obtained.
All the parameters are settled the same as in \cite{FacKan10} for each problem.
The computational results are shown in Table~\ref{tab:FacKan10},
where $e$ denotes the vector of all ones.
All the problems, except problem~A.10a,
were solved successfully by Algorithm~\ref{gs}.
\begin{table}[!htbp]
\centering
\caption{Computational Results for test problems in \cite{FacKan10}}
\label{tab:FacKan10}
\label{test}%
    \begin{tabular}{|r|r|c|c|c|r|c|}  \hline
    {problem}  & initial point& $\tau_0$& $\tau^{(k+1)}$      & iterations    &  {time} & $\varepsilon$        \\ \hline
    A.2        &   $0.05e$  & $0.1$    &(\ref{update:tau})  & 27            &  37.61  & $0.73\cdot10^{-7}$   \\ \hline
    A.3        &   $0.1e$   & $0.1$    &(\ref{update:tau})  & 46            &  13.13  &  $0.10\cdot10^{-5}$  \\ \hline
    A.4        &   $0.1e$   & $0.1$    &(\ref{update:tau})  & 12            &  10.96  & $0.32\cdot10^{-6}$\\ \hline
    A.5        &   $0.1e$   & $0.1$    &(\ref{update:tau})  & 25            &  10.21  & $0.16\cdot10^{-7}$\\ \hline
    A.6        &   $e$      & $0.1$ 	 &(\ref{update:tau})  & 38            &  11.38  & $0.41\cdot10^{-6}$\\ \hline
    A.7        &   $e$      & $0.1$ 	 &(\ref{update:tau})  & 17            &  10.74  & $0.21\cdot10^{-7}$\\ \hline
    A.8        &   $0.5e$   & $0.1$    &(\ref{update:tau})  & 54           &  14.84  & $0.52\cdot10^{-6}$\\ \hline
    A.10a        &   see \cite{FacKan10}  & $0.1$    &(\ref{update:tau})  & 200         &  \multicolumn{2}{|c|}{not convergent}\\ \hline
    A.11       &   $0.5e$   & $0.1$  	 &(\ref{update:tau})  & 37            &   4.86  & $0.11\cdot10^{-6}$\\ \hline
    A.12       &   $e$      & $0.1$	 &(\ref{update:tau})  & 65            &   7.22  & $0.17\cdot10^{-7}$\\ \hline
    A.13       &   $e$      & $0.1$	 &(\ref{update:tau})  & 12           &   2.01  & $0.71\cdot10^{-8}$\\ \hline
    A.14       &   $0.1e$  &  $0.1$  	 &(\ref{update:tau})  & 42            &   50.50  & $0.56\cdot10^{-8}$\\ \hline
    A.15       &   $e$    &  $0.0001$	 &(\ref{update:tau})  & 200           &  45.51  & $0.13\cdot10^{-5}$\\ \hline
    A.17       &   $e$    & 	 $0.001$    &(\ref{update:tau})  & 200            &   25.82  & $0.19\cdot10^{-7}$\\ \hline
    A.18       &   $e$  & $0.5$  	 &(\ref{update:tau})  & 200           &   59.11  & $0.29\cdot10^{-5}$\\ \hline
    \end{tabular}%
\end{table}

\section{Conclusions}
This paper discusses how to use the Gauss-Seidel method for solving the
generalized Nash equilibrium problems of polynomials.
The polynomial optimization in each loop of the Gauss-Seidel method
is solved by the Lasserre type Moment-SOS relaxations.
The convergence results are presented for general GNEPs
and for the special case of GPGs.
In particular, we give a certificate for checking GPGs.
Numerical experiments show that the Gauss-Seidel method
is efficient for solving many GNEPPs,
even if the players' optimization problems are nonconvex.

\end{document}